\newcommand{\comment}[1]{}
\newtheorem{thm}{Theorem}[section]
\newtheorem{lem}[thm]{Lemma}
\newtheorem{obs}[thm]{Observation}
\newtheorem{prop}[thm]{Proposition}
\newtheorem{cor}[thm]{Corollary}
\begin{document}

\title{Total dominator coloring of\\
circulant graphs $C_n(a,b)$}
\author{Adel P. Kazemi$^{1}$\footnote{Corresponding author}, Parvin Jalilolghadr
and Abdollah Khodkar$^2$ \\[1em]
$^{1}$ Department of Mathematics, \\ University of Mohaghegh Ardabili, \\ P.O.\ Box 5619911367, Ardabil, Iran. \\[1em]
$^3$       Department of Mathematics, \\ University of West Georgia, \\ Carrollton, GA 30118, USA.\\[1em]
$^1$ Email: adelpkazemi@yahoo.com\\
$^2$ Email: akhodkar@westga.edu}
\date{}
\maketitle
\begin{abstract}
The circulant graph $C_n(S)$ with connection set $S\subseteq \{1,2,\cdots,n\}$ is the graph
with vertex set $V=\{1,\ldots, n\}$ and two vertices $x,y$ are adjacent
if $|x-y|\in S$. In this paper, we will calculate the total dominator chromatic number
of the circulant graph $C_n(\{a,b\})$ when $n\geq 6$, $gcd(a,n)=1$ and $ a^{-1}b\equiv  3 \pmod{n}$.
\end{abstract}

\textbf{Keywords:} Circulant graph, Total dominator chromatic number, Total domination number.
\newline \textbf{AMS subject classification:} 05C15; 05C69.
\newline Received: Fev. 18, 2019, Accepted: April 23, 2019.
\newline\indent{\scriptsize $\copyright$ 2019 Utilitas Mathematica}
\pagestyle{myheadings}
\markboth{\rightline {\scriptsize  A. P. Kazemi}}
         {\leftline{\scriptsize A. P. Kazemi and et al. ----------------------- Total dominator coloring of circulant graphs $C_n(a,b)$}}                         

\section{\bf Introduction}

All graphs considered here are finite, undirected and simple. For
standard graph theory terminology not given here we refer to \cite{West}. Let $%
G=(V,E) $ be a graph with the \emph{vertex set} $V$ of \emph{order}
$n(G)$ and the \emph{edge set} $E$ of \emph{size} $m(G)$. The
\emph{open neighborhood} and the \emph{closed neighborhood} of a
vertex $v\in V$ are $N_{G}(v)=\{u\in V\ |\ uv\in E\}$ and
$N_{G}[v]=N_{G}(v)\cup \{v\}$, respectively. The \emph{degree} of a
vertex $v$ is also $deg_G(v)=|N_{G}(v)|$.
If $\deg(v)=k$ for every vertex $v\in V$, then $G$ is called $k$-\emph{regular}.
For a subset $S$ of $V$, the set $N(S)=\bigcup_{s\in S} N(s)$ is called the open neighborhood of $S$.

A subset $L$ of vertices of a graph $G$ is an \emph{open packing} if the
open neighborhoods of vertices in $L$ are pairwise disjoint. The  \emph{open packing number} $\rho^o(G)$ of $G$ is the maximum cardinality of an open packing in $G$.

Let $1\leq a_1<a_2<\cdots<a_m\leq \lfloor\frac{n}{2}\rfloor$, where $m,n,a_i$ are integers, $1\leq i\leq m$, and $n\geq 3$. Set $S=\{a_1, a_2,\cdots, a_m\}$. A graph $G$ with the vertex set $\{1, 2, \cdots, n\}$ and the edge set $$
\{\{i,j\}\mid |i-j|\equiv a_t \pmod{n} \mbox{ for some } 1\leq t\leq m\}
$$ is called  a \emph{circulant graph} \cite{MNR} with respect to set $S$ (or with connection set $S$), and denotes by $C_n(S)$ or $C_n(a_1, a_2,\cdots, a_m)$. Notice that $C_n(S)$ is $k$-regular, where $k=2|S|-1$ if $\frac{n}{2}\in S$ and $k=2|S|$ otherwise.

Circulant graphs have a vast number of uses and applications to telecommunication network, VLSI design, parallel and distributed computing. 
The study of chromatic number of circulant graphs of small degree has been widely considered in some literatures (for example see \cite{Heu,MNR,YZ}). Hamiltonicity properties of circulant graphs have been studied by Burkard and Sandholzer \cite{Reb}, who proved that a circulant graph is Hamiltonian if and only if it is connected. Also, in \cite{Heu}, the planarity and bipartiteness of circulant graphs have been argued.

A \emph{total dominating set}, briefly TDS, $S$ of a graph $G$ \cite{HeYe13} is a subset
of the vertices in $G$ such that for each vertex $v$, $N_G(v)\cap
S\neq \emptyset$. The \emph{total domination number $\gamma_t(G)$} of $G$ is the minimum cardinality of a TDS of $G$. A vertex $v$ in $G$ \emph{totally dominates} a subset $X$ of vertices in $G$, written $v \succ_t X$, if $X\subseteq N(v)$, that is, if $v$ is adjacent to every vertex in $X$. Also $v \not\succ_t  X$ means that the vertex $v$ does not totally dominate $X$.

A \emph{proper coloring} of a graph $G =(V,E)$ \cite{West} is a function from
the vertices of the graph to a set of colors such that any two
adjacent vertices receive different colors. The \emph{chromatic number}
$\chi (G)$ of $G$ is the minimum number of colors needed in a proper
coloring of a graph. In a proper coloring of a graph, a \emph{color class} of the
coloring is a set consisting of all those vertices with the same
color. If $f$ is a proper coloring of $G$ with the coloring classes
$V_1,~V_2,~ \ldots,~V_{\ell}$ such that every vertex in $V_i$ has
color $i$, we write simply $f=(V_1,V_2,\ldots,V_{\ell})$.

Motivated by the relation between coloring and domination, the notion of total dominator coloring was introduced in \cite{Kaz2015}. Also the reader can consult \cite{Hen2015,Kaz2014,Kaz2016,KazKaz2018}  for more information.
A \emph {total dominator coloring}, briefly TDC, of a graph $G$ is a proper coloring of $G$ in
which each vertex of the graph is adjacent to every vertex of some
color class. The \emph {total dominator chromatic number} $\chi_d^t(G)$
of $G$ is the minimum number of color classes in a TDC of $G$.

Let $f=(V_1,V_2,\ldots,V_{\ell})$ be a TDC of $G$.
A vertex $v$ is called a \emph{common neighbor} of
$V_i$ if $v\succ_t V_i$, \cite{Kaz2015}. The set of all common neighbors of $V_i$ is
called the \emph {common neighborhood} of $V_i$ in $G$ and denoted by
$CN_G(V_i)$ or simply by $CN(V_i)$.

It is proved in \cite{Hen2015} that for any graph $G$ without an isolated vertex,
\begin{eqnarray}\label{max{chi,gamma_t} =< chi_d^t}
\max\{\chi(G),\gamma_t(G)\} \leq \chi_d^t(G)\leq \gamma_t(G)+\chi(G).
\end{eqnarray}
Also obviousely, every proper coloring $f=(V_1,\ldots,V_{\ell})$ of a graph $G$ of order $n$ is a TDC of $G$ if and only if
\begin{eqnarray}\label{cup CN(V_i)=V(G)}
\bigcup\limits_{i=1}^{\ell} CN_G(V_i)=V(G).
\end{eqnarray}
So if $f=(V_1,\ldots,V_{\ell})$ is a TDC, then
\begin{eqnarray}\label{sum v_i'>=n}
\sum\limits_{i=1}^{\ell} |CN_G(V_i)|\geq n.
\end{eqnarray}

In this paper, for a proper coloring $f=(V_1,V_2,\ldots,V_{\ell})$ of a circulant graph $G$, we assume $v_i=|V_i|$, $v_i'=|CN_G(V_i)|$, and $v_1\geq v_2\geq \cdots \geq v_{\ell}$ (so $v_1$ is at most $\alpha (G)$, the independence number of $G$).

For integers $a_1,a_2,\cdots,a_{\ell}\geq 0$, by $(v_1,v_2, \cdots ,v_{\ell}) \leq (a_1,a_2, \cdots ,a_{\ell})$ we mean that $v_i \leq a_i$ for each $i$. When vertices of $G$ are labeled 1, 2, $\cdots$, $n$, then we may use $V_o$ (and $V_e$)
to denote vertices with odd (even) labels.

The main goal of this paper is to prove the following theorem.

\begin{thm} \label{TDCN Cn(a,b)}
Given the circulant graph $C_n(a,b)$, where $n\geq 6$, $gcd(a,n)=1$ and $ a^{-1}b\equiv  3 \pmod{n}$, we have
\begin{eqnarray*}
\chi_{d}^t(C_n(a,b))=\left\{
\begin{array}{ll}
2\lceil \frac{n}{8} \rceil & \mbox { if } 8\leq n \leq 10,\\
2\lceil \frac{n}{8}\rceil +1     & \mbox{ if }n\equiv 1\pmod{8} \mbox{ or } n=11,\\
2\lceil \frac{n}{8}\rceil +2    & \mbox{ otherwise.}
\end{array}
\right.
\end{eqnarray*}
\end{thm}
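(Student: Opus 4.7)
The plan is to reduce to the canonical case $C_n(1,3)$ and then prove matching upper and lower bounds by separate arguments.

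\medskip

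\noindent\textbf{Reduction.} Since $\gcd(a,n)=1$, multiplication by $a^{-1}\pmod n$ gives a bijection $\mathbb{Z}_n\to\mathbb{Z}_n$ sending $\{a,b\}$ to $\{1,\,a^{-1}b\}\equiv\{1,3\}\pmod n$ (identifying $s\sim n-s$ in the connection set). This yields a graph isomorphism $C_n(a,b)\cong C_n(1,3)$, so it suffices to compute $\chi_d^t(C_n(1,3))$, in which $N(i)=\{i-3,i-1,i+1,i+3\}\pmod n$.

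\medskip

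\noindent\textbf{Upper bound.} The construction rests on the observation that two consecutive vertices $v,v+1$ have disjoint open neighbourhoods whose union is exactly the block $\{v-3,v-2,\ldots,v+4\}$, so the pair $\{v,v+1\}$ jointly totally dominates $8$ consecutive vertices. Taking the pairs $\{8k,8k+1\}$ for $0\leq k<\lceil n/8\rceil$, with a small adjustment at the cyclic boundary, produces a total dominating set of size approximately $2\lceil n/8\rceil$. I then invoke the inequality $\chi_d^t(G)\leq\gamma_t(G)+\chi(G)$ recalled in the introduction: each dominator vertex becomes its own singleton colour class, and the remaining vertices are properly coloured using the bipartition of $C_n(1,3)$ (for even $n$) or a fixed $3$-colouring (for odd $n$). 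The three cases of the theorem arise from the boundary behaviour: for $n\in\{8,9,10\}$ a small ad hoc TDC achieves $2\lceil n/8\rceil$ colours (for $n=8$ the bipartition itself is already a TDC, since every vertex is adjacent to every vertex of the opposite class); for $n\equiv 1\pmod 8$ the boundary leaves a single leftover vertex absorbable with one extra singleton class; for all other $n\geq 12$ two additional classes are needed.

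\medskip

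\noindent\textbf{Lower bound.} Let $f=(V_1,\ldots,V_\ell)$ be a TDC and set $v_i'=|CN(V_i)|$. The inequality $\sum_i v_i'\geq n$ recalled in the introduction, combined with the partition constraint $\sum_i|V_i|=n$, and the direct neighbourhood calculation
\[
|N(u)\cap N(u')|=\begin{cases}3,&|u-u'|\equiv\pm 2\pmod n,\\ 2,&|u-u'|\equiv\pm 4\pmod n,\\ 1,&|u-u'|\equiv\pm 6\pmod n,\\ 0,&\text{otherwise,}\end{cases}
\]
together imply $v_i'\leq 4,3,2,1$ as $|V_i|=1,2,3,\geq 4$ (using that properness forbids differences $1$ and $3$ inside a class, so in any class of size $\geq 2$ the pairwise differences lie in $\{\pm 2,\pm 4,\pm 6,\ldots\}$). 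A first counting argument yields $\ell\geq\lceil n/4\rceil$; refining via the observation that size-$2$ classes attaining $v_i'=3$ must consist of vertices at cyclic distance exactly $2$, and that such classes have a tightly constrained coverage pattern (three consecutive vertices of opposite parity when $n$ is even) that limits how many may appear in a partition, sharpens the bound to the claimed $2\lceil n/8\rceil+c$ with $c\in\{0,1,2\}$ according to $n\bmod 8$.

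\medskip

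\noindent\textbf{Main obstacle.} The hardest step is the sharp separation between the $+1$ regime ($n\equiv 1\pmod 8$ or $n=11$) and the $+2$ regime (generic $n$): global counting determines $\chi_d^t$ only up to an additive constant, so closing the last gap requires a residue-by-residue case analysis verifying that when $n\not\equiv 1\pmod 8$ and $n\geq 12$, every hypothetical TDC of size $2\lceil n/8\rceil+1$ either violates properness or fails to totally dominate the boundary vertices of the last incomplete $8$-block. This combinatorial case analysis, together with separate explicit treatment of the small cases $6\leq n\leq 11$, is where the bulk of the proof will lie.
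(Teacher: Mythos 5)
Your overall architecture (reduce to $C_n(1,3)$ by the isomorphism, then prove matching upper and lower bounds) coincides with the paper's, and your neighbourhood computations are correct; but both halves of the plan have genuine gaps at exactly the points where the work lies.

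For the upper bound, the inequality $\chi_d^t\leq\gamma_t+\chi$ cannot produce the claimed values for odd $n$. Since $\gamma_t(C_n(1,3))$ equals $\lceil n/4\rceil$ or $\lceil n/4\rceil+1$ and $\chi(C_n(1,3))\geq 3$ for odd $n$, the generic bound gives at least $2k+4$ when $n=8k+1$ (target $2k+3$) and at least $2k+5$ when $n=8k+5$ or $n=8k+7$ (target $2k+4$); it also fails for $n=11$. So ``TDS as singletons plus a proper colouring of the rest'' is off by one in precisely the residues that distinguish the cases of the theorem. The paper's construction instead takes the open packing $L=\{8i+1,8i+2\mid 0\leq i\leq k-1\}$ as singleton classes --- a set which is \emph{not} a total dominating set when $8\nmid n$ --- and dominates the leftover boundary vertices through the common neighbourhoods of carefully chosen small classes such as $A_1=\{n-6,n-4,n-2,n\}$, while keeping the remaining two classes as (slightly perturbed) parity classes. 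This is the content you defer to ``a small adjustment at the cyclic boundary,'' and it has to be exhibited separately for each residue modulo $8$; it is not recoverable from the generic inequality.

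For the lower bound, your counting gives $\ell\geq\lceil n/4\rceil$, which is only $2\lceil n/8\rceil+O(1)$, and the refinement you gesture at (constraints on size-two classes with $v_i'=3$) does not by itself close the gap. The decisive missing ingredient is the open packing number of $C_n(1,3)$: when $\sum_i v_i'\geq n$ is nearly tight, the singleton colour classes are forced to form an open packing of cardinality $2k$ or $2k+1$, and the facts that $\rho^o(C_n(1,3))=\lfloor n/4\rfloor-1$ for $n\equiv 4,6\pmod 8$ and that every maximum open packing induces exactly $\lfloor n/8\rfloor$ disjoint edges are what rule out the extremal configurations, residue by residue (together with the parity lemma stating that an independent set containing both parities has size at most $\frac{n}{2}-3$ for even $n$). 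One further small point: for the case of three or more classes of size at least $5$ you need the exact statement $v_i'=0$ for $|V_i|\geq 5$ (immediate from $4$-regularity), not merely $v_i'\leq 1$ as in your table, or the count does not close. As written, the proposal is a faithful outline of the paper's strategy, but the two steps it leaves open constitute essentially the entire proof.
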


The proof of the following theorem can be found in \cite{Heu}.
\begin{thm} \label{HeuTheorem}
{\rm
Let $C_n(a,b)$ be a circulant graph and $gcd(a,n)=1$. Then the graph $C_n(a,b)$ is isomorphic to the graph $C_n(1, c)$ where $c \equiv a^{-1}b \pmod{n}$.
}
\end{thm}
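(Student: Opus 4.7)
By Theorem~\ref{HeuTheorem}, the hypothesis $\gcd(a,n)=1$ together with $a^{-1}b\equiv 3\pmod{n}$ yields $C_n(a,b)\cong C_n(1,3)$, so the task reduces to determining $\chi_d^t(C_n(1,3))$. I would first collect the structural facts about $G:=C_n(1,3)$ that drive the whole argument. For $n\ge 7$ the graph is $4$-regular and the open neighborhood $N(v)=\{v-3,v-1,v+1,v+3\}$ is itself an independent set, so every proper color class has size at most $4$. A direct computation of common neighborhoods then gives four values I will exploit repeatedly: a singleton $\{u\}$ contributes $|CN(\{u\})|=4$; the pairs $\{u,u+2\}$ contribute $|CN|=3$; a $3$-element subset of some $N(v)$ contributes $|CN|\le 2$; and the full neighborhood $N(v)$ contributes $|CN|=1$. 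Note also that $G$ is bipartite iff $n$ is even, since the edges join vertices of opposite parity.

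For the lower bound I would start from the inequality $\sum_i |CN(V_i)|\ge n$ stated in the introduction, together with the classification above. Since the value $4$ can only be realized by singleton classes, there is no way to exceed an amortized contribution of about $4$ per class, and a careful count (separating the classes into a ``dominating'' part made of singletons and pairs, and a ``background'' part accounting for the remaining vertices) yields the universal lower bound $\chi_d^t(G)\ge 2\lceil n/8\rceil$. To upgrade this to $+1$ or $+2$ in the exceptional cases I would argue as follows: when $n$ is odd, $G$ is not bipartite, so $\chi(G)\ge 3$, which forces an extra color beyond the dominating singletons; and when $n$ is in the ``otherwise'' residue classes, I would rule out any TDC with exactly $2\lceil n/8\rceil+1$ classes by showing that the combinatorial constraints on common neighborhoods cannot simultaneously be satisfied together with properness.

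For the upper bound I would exhibit explicit TDCs realizing the claimed values, organized by the residue of $n$ modulo $8$. The general template is to place $\lceil n/8\rceil$ well-spaced pairs of consecutive vertices $\{u_j,u_j+1\}$ around the cycle, turn the $2\lceil n/8\rceil$ vertices of these pairs into singleton color classes (each such pair totally dominates $8$ consecutive vertices, as $N(u_j)\cup N(u_j+1)$ is an interval of length $8$), and then absorb the remaining vertices into at most two further classes drawn from the parity partition $V_o,V_e$, with local repairs near the boundaries between blocks. For $n\in\{8,9,10\}$ the remainder can be folded into the existing singletons, for $n\equiv 1\pmod 8$ or $n=11$ exactly one extra color is needed to handle an odd-cycle obstruction, and for the ``otherwise'' residues two additional classes repair the boundary. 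Verification in each case is a finite check that every vertex is adjacent to every member of some class; for instance, when $n=10$ the four classes $\{1,3,9\},\{2,4,10\},\{5,7\},\{6,8\}$ can be checked to form a TDC with $4=2\lceil 10/8\rceil$ colors.

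The main obstacle will be the lower bound in the ``otherwise'' case, where one must rule out every hypothetical TDC using $2\lceil n/8\rceil+1$ colors. Concretely, one needs to show that the rigid common-neighborhood structure catalogued above forces any such coloring either to leave some vertex outside every $CN(V_i)$ or to violate properness; the case split is delicate because the parity of $n$ and its residue modulo $8$ interact, and each residue requires its own short argument. The upper-bound constructions are also residue-dependent but are concrete exhibitions rather than conceptual hurdles.
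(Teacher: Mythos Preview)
You have proved the wrong statement. The theorem you were asked to prove is the isomorphism $C_n(a,b)\cong C_n(1,c)$ under the hypotheses $\gcd(a,n)=1$ and $c\equiv a^{-1}b\pmod n$; instead you \emph{invoke} this isomorphism in your first line and then spend the rest of the proposal sketching a proof of the main result on $\chi_d^t(C_n(1,3))$, which is a different theorem in the paper. The paper itself does not prove the stated isomorphism either; it simply cites Heuberger \cite{Heu}. The standard argument is short: since $\gcd(a,n)=1$, the map $\phi:\mathbb{Z}_n\to\mathbb{Z}_n$, $\phi(x)=a^{-1}x$, is a bijection, and $|x-y|\equiv a$ or $b\pmod n$ if and only if $|\phi(x)-\phi(y)|\equiv 1$ or $a^{-1}b\equiv c\pmod n$, so $\phi$ is a graph isomorphism from $C_n(a,b)$ to $C_n(1,c)$.

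Even setting aside the misidentification, your sketch for $\chi_d^t(C_n(1,3))$ contains a genuine error that would derail the lower bound. You assert that because $N(v)=\{v-3,v-1,v+1,v+3\}$ is independent, ``every proper color class has size at most $4$''. This does not follow: the independence number of $C_n(1,3)$ is $\lfloor n/2\rfloor$ for even $n$ and $(n-3)/2$ for odd $n$ (Proposition~\ref{alpha_Cn(1,3)}), so color classes can be large. What is true is that $|CN(V_i)|\le 5-|V_i|$ when $|V_i|\le 4$ and $|CN(V_i)|=0$ when $|V_i|\ge 5$ (Observation~\ref{Obs.1}); the paper's lower-bound argument hinges on this distinction and on controlling how many classes have $|V_i|\ge 5$, not on any global cap of $4$. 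Your proposed $n=10$ coloring $\{1,3,9\},\{2,4,10\},\{5,7\},\{6,8\}$ is also not a TDC: vertex $5$ has neighbors $\{2,4,6,8\}$, and none of the four classes is contained in this set.
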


Hence by Theorem \ref{HeuTheorem}, in order to prove Theorem \ref{TDCN Cn(a,b)}, it is sufficient to prove
the following result.

\begin{thm} \label{chi_{d}^t(C_n(1,3))}
{\rm
For any $n\geq 6$,
\begin{eqnarray*}
\chi_{d}^t(C_n(1,3))=\left\{
\begin{array}{ll}
2\lceil \frac{n}{8} \rceil & \mbox{ if } n=6 \mbox{  or } 8\leq n \leq 10,\\
2\lceil \frac{n}{8}\rceil +1     & \mbox{ if }n\equiv 1\pmod{8} \mbox{  or } n=11,\\
2\lceil \frac{n}{8}\rceil +2    & \mbox{ otherwise.}
\end{array}
\right.
\end{eqnarray*}
}
\end{thm}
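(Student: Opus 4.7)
The plan is to prove matching upper and lower bounds on $\chi_d^t(C_n(1,3))$, with the argument split case-by-case according to $n \pmod 8$.

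For the upper bound, I would exhibit explicit TDCs matching the formula in each case. The main building blocks are: singletons $\{v\}$, whose common neighborhood is $N(v)$ of size $4$; pairs $\{v, v+2\}$ of common difference $2$, which are independent and have common neighborhood $\{v-1, v+1, v+3\}$; and $4$-term arithmetic progressions $\{v, v+2, v+4, v+6\}$ with common neighborhood $\{v+3\}$. The small cases $n \in \{6, 8\}$ follow from direct inspection ($C_6(1,3) \cong K_{3,3}$ and $C_8(1,3)$ have a bipartition that is itself a TDC with $2$ colors, since in both graphs each neighborhood coincides with the whole opposite side). For $n \in \{9, 10, 11\}$, a small combination of APs and singletons gives the formula (for $n = 9$ one can take $\{1,3,5\},\{2,4,6\},\{7,9\},\{8\}$; for $n = 10$ and $n = 11$ one uses $\{1,3,5,7\}$, $\{2,4,6,8\}$ plus $n - 8$ singletons). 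For $n \ge 12$ (the generic case), one uses a dominating set $D$ of $C_n(1, 3)$ as singleton color classes -- so that every neighbor of $d \in D$ is witnessed by $\{d\} \subseteq N(\cdot)$ -- and then $\chi(C_n(1,3)\setminus D)$ additional large independent classes to complete the proper coloring. When $n$ is even this gives two additional classes (by bipartiteness), matching the ``$+2$'' of the formula; when $n$ is odd it typically gives three additional classes, but with a suitable choice of $D$ the total matches the formula. The $n \equiv 1 \pmod 8$ case with $n \ge 17$ requires an especially tight choice.

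For the lower bound, let $f = (V_1,\ldots,V_\ell)$ be any TDC of $C_n(1,3)$. The inequality (\ref{sum v_i'>=n}) gives $\sum_i |CN(V_i)| \ge n$. Any \emph{useful} class $V_i$ (one with $CN(V_i) \ne \emptyset$) must satisfy $V_i \subseteq N(v)$ for some $v$, so $|V_i| \le 4$, and an enumeration of the independent subsets of $N(v) = \{v-3, v-1, v+1, v+3\}$ establishes (for $n \ge 7$ with $n \ne 8$)
\[
|CN(V_i)| \le 5 - |V_i|,
\]
with equality if and only if $V_i$ is an arithmetic progression of common difference $2$. Let $k_u$ denote the number of useful color classes, $m_u$ the useless ones, and $n_u$ the total number of vertices in useless classes. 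Summing yields $5 k_u \ge 2n - n_u$, with $\ell = k_u + m_u$. Minimizing $\ell$ subject to this inequality, together with the constraints that each useless class is an independent set in $C_n(1,3)$ (so its size is bounded by $\alpha(C_n(1,3))$) and that $\bigcup_i CN(V_i) = V(C_n(1,3))$, will yield the formula case-by-case.

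The main obstacle will be upgrading the purely counting bound $\sum |CN(V_i)| \ge n$ to the stricter geometric condition $\bigcup_i CN(V_i) = V(C_n(1,3))$. Numerically the counting bound alone yields only $\ell \ge \lceil 2n/5\rceil$, which is weaker than the formula for many $n$; the ``$+1$'' and ``$+2$'' corrections come from the observation that any vertex $v$ with $N(v)$ lying entirely inside useless classes cannot be witnessed, so the useful classes must ``dominate'' $C_n(1,3)$ in a domination-theoretic sense. Showing that this domination requirement together with the sum bound forces at least the stated number of colors for every residue class of $n\pmod 8$ -- in particular, ruling out clever configurations with $m_u = 1$ and a single very large useless class absorbing most of one side of the bipartition -- is where I expect the most delicate case work to lie.
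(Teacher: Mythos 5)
Your overall framework coincides with the paper's: the upper bound via explicit colorings built from singletons, difference-$2$ pairs, and $4$-term arithmetic progressions of difference $2$, and the lower bound via the counting inequality $\sum_i |CN(V_i)| \ge n$ combined with $|CN(V_i)| \le 5 - |V_i|$ for small classes and $|CN(V_i)| = 0$ for classes of size at least $5$. Your small constructions for $n = 9, 10, 11$ check out, and your generic construction (a set of singleton classes whose neighborhoods cover the graph, plus a few large independent classes) is the same idea as the paper's, which uses a near-maximum \emph{open packing} $L = \{8i+1, 8i+2 \mid 0 \le i \le k-1\}$ as the singletons together with carefully chosen small classes $A_j$ to cover the $O(1)$ vertices missed by $N(L)$. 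One small slip: for the singletons to witness \emph{every} vertex you need a \emph{total} dominating set, not a dominating set, since a vertex of $D$ with no neighbor in $D$ is not covered; and for $n \equiv 3 \pmod 8$ the answer is $\gamma_t + 3$, so ``a suitable choice of $D$'' is doing nontrivial work that you have not exhibited.

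The genuine gap is the lower bound, and you have in effect flagged it yourself. The counting bound alone gives roughly $\ell \ge \lceil 2n/5 \rceil + $ (corrections), which falls short of $2\lceil n/8 \rceil + 2 \approx n/4 + 2$ for all large $n$, so essentially the entire content of the lower bound lies in the ``delicate case work'' you defer. Concretely, what is missing is: (i) the classification by $m = |\{i : |V_i| \ge 5\}|$, where $m \ge 3$, $m = 1$ and $m = 0$ die quickly but $m = 2$ with $n \equiv 2, 3, 4 \pmod 8$ requires pinning down $v_1 + v_2$ to within one or two values and then showing the remaining $\approx 2k$ classes must essentially form an open packing whose neighborhoods tile the graph; (ii) the computation $\rho^o(C_n(1,3)) = \lfloor n/4 \rfloor - 1$ for $n \equiv 4, 6 \pmod 8$, which is exactly what kills the case $n \equiv 4 \pmod 8$ (an open packing of size $2k+1$ in $C_{8k+4}(1,3)$ does not exist); and (iii) the parity lemma that an independent set containing both odd and even vertices has size at most $n/2 - 3$, which is what rules out the ``single very large useless class'' configurations you mention. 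Without these three ingredients the argument does not close; with them, it is the paper's proof. So the proposal is a correct plan with the right invariants identified, but the decisive steps are asserted rather than proved.
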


\section{Preliminary Results}

In this section, we will calculate the independence and the packing numbers of the circulant graphs $C_n(1,3)$ and also we will present some crucial lemmas. First an observation.
Recall that for a proper coloring $f=(V_1,V_2,\ldots,V_{\ell})$ of a circulant graph $G$, we assume $v_i=|V_i|$, $v_i'=|CN_G(V_i)|$, and $v_1\geq v_2\geq \cdots \geq v_{\ell}$. We also assume the vertex set of the circulant graph $C_n(1,3)$ is $\{1,2,3,\ldots,n\}$.

\begin{obs} \label{Obs.1}
{\rm
For any proper coloring of the circulant graph $C_n(1,3)$ of order $n\geq 9$,
\begin{itemize}
\item $v_i+v_i'\leq 5$ if $v_i \leq 4$, and 
\item $v_i'=0$ if $v_i \geq 5$.
\end{itemize}
}
\end{obs}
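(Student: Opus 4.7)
The plan is to translate both bullets into facts about the common neighbourhoods $CN_G(V_i)=\bigcap_{v\in V_i}N_G(v)$. Since $C_n(1,3)$ is $4$-regular with $N_G(u)=\{u-3,u-1,u+1,u+3\}$ reduced mod $n$, a vertex $u$ belongs to $CN_G(V_i)$ exactly when $V_i\subseteq N_G(u)$. In particular $v_i'>0$ forces $v_i\le 4$, giving the second bullet at once.

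For the first bullet I would split on $v_i\in\{1,2,3,4\}$, remembering that $V_i$ is independent (a colour class of a proper colouring). If $v_i=1$ then $v_i'\le |N_G(v)|=4$. If $v_i=4$ then $V_i=N_G(u)$ for every $u\in CN_G(V_i)$; for $n\ge 9$ the set $N_G(u)$ has consecutive cyclic gaps $2,2,2,n-6$, so the single long gap identifies $u$ uniquely and $v_i'\le 1$. If $v_i=3$, after translating so that one common neighbour is $0$, I may assume $V_i\subset\{-3,-1,1,3\}$; up to the reflection $x\mapsto -x$ the gap pattern of $V_i$ in $\mathbb{Z}_n$ is $(2,2)$ or $(2,4)$, and direct inspection yields $CN_G(\{a,a+2,a+4\})=\{a+1,a+3\}$ and $CN_G(\{a,a+2,a+6\})=\{a+3\}$, so $v_i'\le 2$. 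Finally, if $v_i=2$, letting $d$ denote the cyclic distance between its two vertices, a nonempty $CN_G(V_i)$ forces $d\in\{2,4,6\}$, and counting the pairs of simultaneously realisable differences in $\{\pm 1,\pm 3\}$ gives $|CN_G(V_i)|=3,2,1$ respectively. In every case $v_i+v_i'\le 5$.

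The only real care point is wraparound. Each case analysis implicitly assumes that certain residues (the candidate common-neighbour offsets $\pm 1,\pm 3$ and the candidate inner distances $\pm 2,\pm 4,\pm 6$) are pairwise distinct modulo $n$ and that none of the inner distances coincides with an adjacency difference $\pm 1,\pm 3$; both facts hold precisely because $n\ge 9$. The main obstacle is therefore not conceptual but book-keeping: enumerating all admissible $V_i$ inside $\{-3,-1,1,3\}$ and checking that no common neighbourhood is silently inflated by a modular collision.
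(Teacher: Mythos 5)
Your argument is correct and is essentially the computation the paper leaves implicit: Observation \ref{Obs.1} is stated without proof, and your case analysis for $v_i=2,3$ reproduces exactly the common-neighbourhood counts the authors record in Observation \ref{obs2}, while the degree bound handles $v_i\ge 5$ and the uniqueness of the common neighbour handles $v_i=4$ (where $n\ge 9$ is indeed what rules out the $N(u)=N(u+4)$ collision present in $C_8(1,3)$). No gaps; your flagged wraparound caveat is the only delicate point and it is resolved correctly.
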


\begin{prop} \label{alpha_Cn(1,3)}
{\rm
For any $n\geq 4$,
\begin{eqnarray*}
\alpha(C_n(1,3))= \left\{
\begin{array}{ll}
\frac{n}{2}            & \mbox{ if }n \mbox{ is even,} \\
\frac{n-3}{2} & \mbox{ if }n \mbox{ is odd.}
\end{array}
\right.
\end{eqnarray*}
}
\end{prop}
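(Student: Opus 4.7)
The plan is to handle the even and odd parities of $n$ separately, and in each case to separate the argument into a construction of a suitable independent set (for the lower bound) and a counting argument based on the Hamiltonian cycle structure (for the upper bound). The key structural observation is that $C_n(1,3)$ contains the Hamiltonian cycle $1\text{-}2\text{-}\cdots\text{-}n\text{-}1$ as a spanning subgraph (arising from the connection value $1$), so the distance-$1$ constraints already bound $\alpha$ by $\lfloor n/2\rfloor$; the distance-$3$ edges cut this down further in the odd case.

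For the lower bound, if $n$ is even I would take $V_e=\{2,4,\ldots,n\}$. Any two elements differ by an even integer mod $n$, hence by neither $1$ nor $3$, so $V_e$ is independent and has size $n/2$. If $n$ is odd (and $n\ge 5$), I would take
\[
I=\{1,3,5,\ldots,n-4\},
\]
a set of size $(n-3)/2$. The consecutive differences along this list are all $2$, and the wrap-around difference from $n-4$ back to $1$ is $5$ modulo $n$; in particular no two elements are at cyclic distance $1$ or $3$. (The tiny cases $n=4,5$ are checked by hand: $C_4(1,3)=C_4$ and $C_5(1,3)=K_5$, matching the formula.)

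For the upper bound, let $I$ be any independent set in $C_n(1,3)$ of size $k$, and list its elements in the cyclic order induced by the Hamiltonian cycle. Let $g_1,\ldots,g_k$ denote the cyclic gaps, so $\sum_i g_i=n$. Independence in $C_n$ forces $g_i\ge 2$ for every $i$; independence against the distance-$3$ edges forces $g_i\ne 3$ for every $i$ (sums of two or more consecutive gaps are automatically $\ge 4$, so no other cyclic distance can equal $3$). When $n$ is even, $k\cdot 2\le \sum g_i=n$ gives $k\le n/2$. When $n$ is odd, suppose for contradiction $k\ge (n-1)/2$; since $k\le\alpha(C_n)=(n-1)/2$, we get $k=(n-1)/2$, and then $\sum g_i=n=2k+1$ with each $g_i\ge 2$ forces exactly one gap equal to $3$ and all others equal to $2$, contradicting $g_i\ne 3$. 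Hence $k\le (n-3)/2$ for odd $n$.

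The only real obstacle is making the upper-bound argument airtight: one must verify that the ``no distance $3$'' condition really is equivalent to the single condition $g_i\ne 3$ on consecutive gaps (which I handle by noting that any sum of two or more $g_i$'s is at least $4$, so the only way two elements of $I$ could be at cyclic distance $3$ is via a single gap). With that clarified, the gap bookkeeping immediately yields both parts of the formula, and matches the explicit constructions from the lower bound.
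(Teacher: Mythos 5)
Your proof is correct, and it follows the same basic strategy as the paper (explicit independent sets for the lower bounds, cyclic-distance constraints on the Hamiltonian cycle $(1,2,\ldots,n)$ for the upper bounds), but your upper-bound argument for odd $n$ is genuinely more complete than what the paper records. The paper's proof only observes that consecutive vertices of an independent set must lie at distance at least $2$ on the cycle --- which yields $\alpha\leq\lfloor n/2\rfloor$ --- and then, for odd $n$, exhibits the even vertices minus $n-1$ and simply asserts that this set has the largest size; the reduction from $(n-1)/2$ to $(n-3)/2$ is never actually argued. Your gap bookkeeping supplies exactly that missing step: an independent set of size $(n-1)/2$ in odd order would force one cyclic gap to equal $3$ and all others to equal $2$, and a single gap of $3$ is forbidden by the distance-$3$ edges. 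Your preliminary observation that any sum of two or more gaps is at least $4$ is also the right way to justify that the distance-$3$ prohibition reduces to the single-gap condition $g_i\neq 3$, a point the paper glosses over entirely. In short, your writeup is a tightened version of the intended argument rather than a different one, and it buys a rigorous upper bound in the odd case where the paper only offers a construction.
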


\textbf{Proof: }
Let $L$ be an independent set in $C_n(1,3)$. Then the distance between the vertices in L must be at least 2. Hence, if $n$ is even, then the even vertices form an independent set of the largest size. If $n$ is odd, then there is an edge between vertex $n-1$ and $2$. Hence, the even vertices minus vertex $n-1$ form an independent set of the largest size. This completes the proof.
$\square$ 


\begin{prop} \label{open-packing Cn(1,3)}
{\rm
For any $n\geq 3$,
\begin{eqnarray*}
\rho^o(C_n(1,3))= \left\{
\begin{array}{ll}
\lfloor\frac{n}{3}\rfloor & \mbox{ if } 3\leq n\leq 6,\\
\lfloor\frac{n}{4}\rfloor-1 & \mbox{ if } n\equiv 4,6\pmod{8},\\
\lfloor\frac{n}{4}\rfloor  & \mbox{ otherwise.}
\end{array}
\right.
\end{eqnarray*}
}
\end{prop}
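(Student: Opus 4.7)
The plan is to reinterpret the open packing condition as an independence condition on an auxiliary circulant graph and then to compute its independence number. Two distinct vertices $u,v$ of $C_n(1,3)$ have $N(u)\cap N(v)\neq\emptyset$ iff they share a common neighbour, and starting from $N(v)=\{v\pm1,v\pm3\}$ a direct calculation of the second neighbourhood $N(N(v))\setminus\{v\}$ shows that this happens iff $u-v\in\{\pm2,\pm4,\pm6\}\pmod n$. Hence $L$ is an open packing in $C_n(1,3)$ iff $L$ is an independent set in the circulant graph $H:=C_n(2,4,6)$, so $\rho^o(C_n(1,3))=\alpha(H)$.

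I would then reduce $H$ according to the parity of $n$. When $n$ is odd, $2$ is invertible in $\mathbb Z_n$ and multiplication by $2^{-1}$ is an isomorphism $H\to C_n(1,2,3)$. When $n$ is even, the connection elements $2,4,6$ are all even, so $H$ has no edges between vertices of opposite parity; the relabelling $2i\mapsto i$ identifies each parity class with $C_{n/2}(1,2,3)$, exhibiting $H$ as a disjoint union of two copies of $C_{n/2}(1,2,3)$. Consequently $\alpha(H)=\alpha(C_n(1,2,3))$ for $n$ odd and $\alpha(H)=2\alpha(C_{n/2}(1,2,3))$ for $n$ even.

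Next I would determine $\alpha(C_m(1,2,3))$. For $3\le m\le 7$ every two vertices of $\mathbb Z_m$ lie at cyclic distance at most $3$, so $C_m(1,2,3)=K_m$ and $\alpha=1$. For $m\ge 8$, an independent set in $C_m(1,2,3)$ is exactly a subset of $\mathbb Z_m$ whose successive cyclic gaps are all at least $4$; summing the gaps around the cycle gives the bound $\alpha(C_m(1,2,3))\le\lfloor m/4\rfloor$, and the explicit set $\{1,5,9,\ldots,4\lfloor m/4\rfloor-3\}$ achieves equality.

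Putting the ingredients together yields $\rho^o(C_n(1,3))=\lfloor n/4\rfloor$ for odd $n\ge 9$, and $\rho^o(C_n(1,3))=2\lfloor n/8\rfloor$ for even $n\ge 16$. A short arithmetic check on $n\bmod 8$ converts the even expression into $\lfloor n/4\rfloor$ when $n\equiv 0,2\pmod 8$ and into $\lfloor n/4\rfloor-1$ when $n\equiv 4,6\pmod 8$, matching the statement. The remaining small orders $n\in\{3,4,5,6,7,8,10,12,14\}$ I would handle directly, using the fact that in each of them either $C_n(1,2,3)$ or $C_{n/2}(1,2,3)$ is a complete graph and so $\alpha(H)$ is immediate. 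I expect the main obstacle to be the careful bookkeeping that equates $2\lfloor n/8\rfloor$ with the three pieces of the formula across the four residue classes modulo $8$, together with ensuring that the explicit independent sets remain valid after the cyclic wrap-around.
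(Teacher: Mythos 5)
Your proposal is correct, and it takes a genuinely different route from the paper. The key identity you use --- that $L$ is an open packing of $C_n(1,3)$ if and only if $L$ is independent in the Cayley graph on $\mathbb{Z}_n$ with connection set $\{\pm 2,\pm 4,\pm 6\}$ --- is right (the pairwise differences of elements of $\{\pm 1,\pm 3\}$ are exactly $\{0,\pm2,\pm4,\pm6\}$), and the subsequent algebra (multiplication by $2^{-1}$ for odd $n$; splitting into two copies of $C_{n/2}(1,2,3)$ for even $n$) together with the elementary gap count $\alpha(C_m(1,2,3))=\lfloor m/4\rfloor$ for $m\geq 8$ gives $\lfloor n/4\rfloor$ for odd $n$ and $2\lfloor n/8\rfloor$ for even $n$, which reproduces the stated trichotomy; your arithmetic converting $2\lfloor n/8\rfloor$ across the residues $0,2,4,6 \pmod 8$ checks out, as do the small orders where the relevant $C_m(1,2,3)$ is complete. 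The paper instead proves the upper bound $\lfloor n/4\rfloor$ directly from $4$-regularity, improves it by one for $n\equiv 4,6\pmod 8$ via a parity/counting argument (an odd-sized packing forces one parity class to dominate more vertices than exist in the other class), and establishes the lower bound by exhibiting explicit open packings such as $\{1+8t,2+8t\}$. Your reduction is arguably cleaner and makes the case split emerge automatically rather than by ad hoc counting; it also classifies \emph{all} maximum open packings, not just one. What the paper's constructive route buys is the explicit packings and their induced structure (pairs at distance $1$, i.e., $\lfloor n/8\rfloor$ edges plus possibly an isolated vertex), which Observation \ref{obs3} records and which is reused in the lower-bound arguments of Lemma \ref{TDCN n>=13 except 14,19}; your approach would recover this structure too, but you would need to extract it explicitly. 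One minor caution: for $n\in\{3,4,5\}$ the graph $C_n(1,3)$ falls outside the paper's own definition (which requires the connection elements to be at most $\lfloor n/2\rfloor$), so ``handling these directly'' requires fixing an interpretation first; this does not affect anything used later, since the main theorem only concerns $n\geq 6$.
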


\textbf{Proof: }
If $3\leq n \leq 6$, then it is easy to see that
$\rho^o(C_n(1,3))=\lfloor \frac{n}{3}\rfloor$. Now let
$n\geq 7$. By the 4-regularity of
$C_n(1,3)$, we have $\rho^o(C_n(1,3))\leq \lfloor \frac{n}{4}\rfloor$.

Let $n\equiv 4,6 \pmod{8}$, and let $L$ be an open packing in $C_n(1,3)$ with cardinality
$\lfloor\frac{n}{4}\rfloor$. Let $V(C_n(1,3))=V_o\cup V_e$ be the
partition of the vertex set of the graph to the set of odd numbers
$V_o$ and the set of even numbers $V_e$. For simplicity set $n_e=|L\cap V_e|$ and $n_o=|L\cap V_o|$.
Since $n$ is even, it follows that $N(i)\subseteq V_o$ if $i\in L\cap V_e$
and $N(j)\subseteq V_e$ if $j\in L\cap V_o$. We may assume that $n_e>n_o$. Since $n_o+n_e=\lfloor\frac{n}{4}\rfloor=2k+1$, $n_e\geq k+1$. So the number of vertices dominated by $n_e$ even vertices is at least $4k+4$ which is a contradiction, because the number of odd vertices is at most 4k+3. \\
Therefore, for any $n\geq 7$,
\begin{eqnarray*}
\rho^o(C_n(1,3))\leq \left\{
\begin{array}{ll}
\lfloor\frac{n}{4}\rfloor-1  & \mbox{ if } n\equiv 4,6\pmod{8} , \\
\lfloor\frac{n}{4}\rfloor  & \mbox{ otherwise}.
\end{array}
\right.
\end{eqnarray*}
Now since the sets $\{1+8t, 2+8t~|~0\leq t\leq
\lfloor\frac{n}{8}\rfloor-1\}$, $\{8t, 8t-5~|~1\leq t\leq
\lfloor\frac{n}{8}\rfloor\}\cup\{n\}$ and $\{1+8t,
2+8t~|~0\leq t\leq \lfloor\frac{n}{8}\rfloor-1\}\cup\{n-6\}$ are all
open packing for $C_n(1,3)$ when $n\not\equiv 5,7\pmod{8}$, $n\equiv
5\pmod{8}$ and $n\equiv 7\pmod{8}$, respectively, the equality is obtained.
$\square$ 
\begin{obs}\label{obs2}
{\rm
Let $u,v,w\in V(C_n(1,3))$.
\begin{itemize}
\item If the distance between $u$ and $v$ on the cycle $(1,2,\ldots,n)$ is $2, 4, 6$, then
$|CN(\{u,v\})|=3,2,1$, respectively, otherwise $|CN(\{u,v\})|=0$.

\item If the distances between $u$ and $v$, $v$ and $w$, $u$ and $w$ are $2,2,4,$ respectively, then
$|CN(\{u,v,w\})|=2$ and if the distances are $2,4,6$, then $|CN(\{u,v,w\})|=1$, otherwise $|CN(\{u,v,w\})|=0$.
\end{itemize}
}
\end{obs}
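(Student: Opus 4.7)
The plan is a direct computation using the fact that each vertex $x$ in $C_n(1,3)$ has neighborhood $N(x)=\{x-3,x-1,x+1,x+3\}\pmod{n}$, which contains four distinct elements provided $n\geq 8$. By vertex-transitivity I translate so that $u=0$ throughout, and by the reflection $x\mapsto -x$ of the cycle I may additionally assume the other vertex (or vertices) lies on the ``positive'' side.

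For the first bullet, write $v=d$ where $d\in\{1,\ldots,\lfloor n/2\rfloor\}$ is the cycle distance between $u$ and $v$. Then
\[
CN(\{u,v\})=\{-3,-1,1,3\}\cap\{d-3,d-1,d+1,d+3\}\pmod{n}.
\]
A direct enumeration for $d=1,2,3,4,5,6$ yields intersections of sizes $0,3,0,2,0,1$ respectively, matching the claim. For $d\ge 7$, under the mild assumption $d+3\le n-4$ (automatic from $d\le\lfloor n/2\rfloor$ when $n$ is large), every element of $\{d-3,d-1,d+1,d+3\}$ lies in $\{4,\ldots,n-4\}$ and is therefore disjoint from $\{n-3,n-1,1,3\}$, giving an empty intersection.

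For the second bullet I observe that $CN(\{u,v,w\})=N(u)\cap N(v)\cap N(w)$ is contained in each pairwise common neighborhood, so by the first bullet it is nonempty only when every pair in $\{u,v,w\}$ is at cycle distance in $\{2,4,6\}$. On a sufficiently long cycle the three pairwise cycle distances $d_1,d_2,d_3$ of any three vertices satisfy, after relabeling, $d_3=d_1+d_2$ (the three oriented gaps sum to~$n$, and for $n$ large enough each cycle distance is given by the short way around). Combined with $d_1,d_2,d_3\in\{2,4,6\}$, this forces the multiset $\{d_1,d_2,d_3\}$ to be either $\{2,2,4\}$ or $\{2,4,6\}$. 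Taking representatives $u=0,v=2,w=4$ and $u=0,v=2,w=6$ then gives
\[
N(0)\cap N(2)\cap N(4)=\{-1,1,3\}\cap\{1,3,5,7\}=\{1,3\}
\]
and $N(0)\cap N(2)\cap N(6)=\{-1,1,3\}\cap\{3,5,7,9\}=\{3\}$, matching the stated sizes $2$ and $1$.

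The only step that is not raw modular arithmetic is the realizability claim about the three cycle distances, which I expect to be the least mechanical point; everything else is elementary enumeration in $\mathbb{Z}_n$.
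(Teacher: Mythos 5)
The paper states this as an \emph{Observation} with no proof at all, so there is nothing of the authors' to compare against; your direct computation with $N(x)=\{x\pm1,x\pm3\}$, translation to $u=0$, and enumeration of the finitely many distances is exactly the natural argument, and it is correct wherever your ``$n$ sufficiently large'' hedges are actually satisfied. It is worth pinning down precisely where those hedges bite, because the observation as literally stated is sensitive to wrap-around. For the first bullet, your enumeration of $d=1,\dots,6$ tacitly assumes the two translated neighborhoods do not wrap into each other: for $n=9$, $d=3$ one gets $N(0)\cap N(3)=\{6,8,1,3\}\cap\{0,2,4,6\}=\{6\}$, and for $n=11$, $d=5$ one gets $N(0)\cap N(5)=\{8,10,1,3\}\cap\{2,4,6,8\}=\{8\}$, so ``otherwise $0$'' genuinely requires $n\ge 12$. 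This is harmless for the paper (the observation is only invoked inside Lemma~3, where $n\ge 13$), but your proof should state the threshold rather than leave it implicit.

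The more substantive loose end is the one you yourself flagged: the claim that the three pairwise cycle distances satisfy $d_3=d_1+d_2$. That follows from your gap argument only when at most two of the three arcs between $u,v,w$ have length $\le 6$, i.e.\ when $n>18$. For even $n\in\{12,14,16,18\}$ there are realizable triples with all three gaps at most $6$, hence all three pairwise distances in $\{2,4,6\}$ but with no additive relation: distances $(4,4,4)$ in $C_{12}$, $(4,4,6)$ and $(2,6,6)$ in $C_{14}$, $(4,6,6)$ in $C_{16}$, and $(6,6,6)$ in $C_{18}$. Your argument does not show these have empty triple common neighborhood, yet the statement asserts they do. Fortunately a direct check confirms it in each case (e.g.\ in $C_{18}$, $N(0)\cap N(6)=\{3\}$ and $3\notin N(12)$; in $C_{12}$, $N(0)\cap N(4)=\{1,3\}$ and neither lies in $N(8)$), so the observation survives, but these finitely many exceptional configurations must be listed and verified separately for the proof to be complete on the range where the paper actually uses the second bullet (Case~2 of Lemma~3 includes $n=18$).
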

\begin{obs}\label{obs3}
{\rm
Let $L$ be an open packing in $C_n(1,3)$ of cardinality $\rho^o(C_n(1,3))$.
\begin{itemize}
\item If $n\not\equiv 5, 7\pmod 8$, then the subgraph of $C_n(1,3)$ induced by the vertices in $L$
consists of $\lfloor\frac{n}{8}\rfloor $ edges.

\item If $n\equiv 5, 7\pmod 8$, then the subgraph of $C_n(1,3)$ induced by the vertices in $L$
consists of $\lfloor\frac{n}{8}\rfloor$ edges and an isolated vertex.
\end{itemize}
}
\end{obs}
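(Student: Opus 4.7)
My plan is to parametrize the open packing $L$ by its cyclic gap sequence and to tighten the upper-bound argument of Proposition~\ref{open-packing Cn(1,3)} so as to pin down which gaps can occur when $|L|$ is maximum. Writing $L=\{v_1<v_2<\cdots <v_m\}$, set $g_i=v_{i+1}-v_i$ for $1\leq i\leq m-1$ and $g_m=n-v_m+v_1$, so that $g_1+\cdots +g_m=n$. Since $N(v)=\{v\pm 1,v\pm 3\}$ in $C_n(1,3)$, two vertices $u,v$ satisfy $N(u)\cap N(v)=\emptyset$ iff the cyclic distance between $u$ and $v$ is not in $\{2,4,6\}$. Hence the open-packing condition translates into the requirement that every cyclic partial sum of consecutive $g_i$'s avoids $\{2,4,6\}$; equivalently, each $g_i\in\{1,3,5\}\cup\{7,8,9,\ldots\}$ and no adjacent pair $(g_i,g_{i+1})$ lies in the list $(1,1),(1,3),(3,1),(3,3),(1,5),(5,1)$ (longer partial sums then avoid $\{2,4,6\}$ automatically, since three or more admissible gaps already sum to at least $7$).

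A direct inspection of the admissible pairs shows $g_i+g_{i+1}\geq 8$, and summing cyclically gives $8m\leq 2n$, recovering the bound $m\leq n/4$ from Proposition~\ref{open-packing Cn(1,3)}. The crucial new step is that when $m=\rho^o(C_n(1,3))$, the slack $2n-8m$ is at most a small absolute constant, which I would exploit residue-class by residue-class modulo 8 to pin down the sequence.

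For $n\equiv 0\pmod 8$ with $m=n/4$ the pair-sum bound is tight on every cyclic pair; the only admissible sequences are then rotations of $(1,7,1,7,\ldots)$ or $(3,5,3,5,\ldots)$, each producing $n/8$ short gaps (gaps in $\{1,3\}$, which correspond to edges of $C_n(1,3)$) alternating with $n/8$ long gaps, so $G[L]$ consists of exactly $\lfloor n/8\rfloor$ edges and no isolated vertex. For $n\equiv 1,2,3\pmod 8$ one still has $m=2\lfloor n/8\rfloor$, and a perturbation analysis shows that the excess $2n-8m$ must be absorbed into a single long gap (turning one $(1,7)$ into $(1,7+\varepsilon)$, or one $(3,5)$ into $(3,5+\varepsilon)$), so the count of short gaps is again $\lfloor n/8\rfloor$ and no isolated vertex arises. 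The cases $n\equiv 4,6\pmod 8$ use the parity argument already employed in Proposition~\ref{open-packing Cn(1,3)} to force $m=2\lfloor n/8\rfloor$, after which the same accounting applies.

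For $n\equiv 5,7\pmod 8$ we have $m=2\lfloor n/8\rfloor+1$, which is odd, so a perfect short/long alternation around the cycle is impossible. I would argue that precisely one vertex of $L$ must be flanked by two long gaps, producing the claimed isolated vertex, while the remaining $2\lfloor n/8\rfloor$ vertices still pair up via $\lfloor n/8\rfloor$ short gaps. The main obstacle is verifying rigorously that no admissible sequence of maximum length can trade a short gap for a long one: this requires an exhaustive check against the forbidden-pair list together with careful tracking of how the excess $2n-8m$ is distributed, and the argument is somewhat delicate because the admissibility rules are asymmetric (for instance $(3,5)$ is allowed while $(5,1)$ is forbidden), so rotations and reflections of seemingly similar gap patterns behave differently.
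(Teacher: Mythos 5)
Your reduction of the open-packing condition to a cyclic gap sequence, with the forbidden values $\{2,4,6\}$ for single gaps and the forbidden adjacent pairs $(1,1),(1,3),(3,1),(3,3),(1,5),(5,1)$, is correct, and since every admissible adjacent pair sums to at least $8$ it does recover $m\le n/4$. (The paper states this Observation without proof, so there is no argument of theirs to compare yours to.) The difficulty is that the decisive step of your plan --- that when $m=\rho^o$ the excess $2n-8m$ ``must be absorbed into a single long gap,'' so that exactly $\lfloor n/8\rfloor$ gaps lie in $\{1,3\}$ and at most one vertex of $L$ is flanked by two long gaps --- is asserted rather than proved, and it is in fact false. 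Quantitatively, if $s$ denotes the number of gaps in $\{1,3\}$ (these are exactly the edges of the induced subgraph, and the isolated vertices are exactly the $m-2s$ vertices flanked by two long gaps), then, since a long--long adjacency has pair sum at least $10$, your pair-sum identity only yields $m-2s\le n-4m$. This forces the claimed structure when $n\equiv 0,1,5\pmod 8$, and an extra parity argument on the gaps (all gaps would have to be odd, making their sum even) closes the case $n\equiv 3\pmod 8$; but it does not force it otherwise.

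Indeed the Observation itself fails for $n\equiv 2,4,6,7\pmod 8$, so no completion of your perturbation analysis can succeed in the stated generality. For example, $L=\{1,6\}$ in $C_{10}(1,3)$ has $N(1)=\{2,4,8,10\}$ and $N(6)=\{3,5,7,9\}$ disjoint, so $L$ is an open packing of the maximum cardinality $\rho^o=2$, yet it induces $0$ edges rather than $\lfloor 10/8\rfloor=1$. Likewise $L=\{1,6,11\}$ in $C_{15}(1,3)$ (gap sequence $5,5,5$) is a maximum open packing inducing three isolated vertices and no edge, and $L=\{1,4,9,14\}$ in $C_{18}(1,3)$ (gap sequence $3,5,5,5$) is a maximum open packing inducing one edge and two isolated vertices instead of the claimed two edges and none. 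At best your framework proves the Observation for $n\equiv 0,1,3,5\pmod 8$; this happens to cover the one place where the paper explicitly invokes it (Subcase 1.1 of Lemma \ref{TDCN n>=13 except 14,19}, where $n\equiv 3\pmod 8$), but not the cases $n\equiv 2,4\pmod 8$ that the paper later treats ``by a similar argument.''
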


The next lemma states that for even $n$ how large the cardinality of an independent set in $C_n(1,3)$ can be
when it contains numbers with different parity.

\begin{lem} \label{Max.ind.dif.parity}
{\rm
For even $n\geq 8$, let $I$ be an independent set in $C_n(1,3)$ such
that the numbers in $I$ have different parity. Then $|I|\leq \frac{n}{2}-3$.
}
\end{lem}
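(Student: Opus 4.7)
The plan is to analyze the cyclic gap sequence of $I$ on the Hamilton cycle $(1,2,\ldots,n)$ of $C_n(1,3)$ and squeeze the bound from two simple observations: gaps avoid the values $1$ and $3$, and a mixed-parity independent set must contain at least two ``odd'' gaps.

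First, I would list the elements of $I$ in cyclic order as $i_1<i_2<\cdots<i_k$ and form the cyclic gaps $d_j = i_{j+1}-i_j$ for $j<k$ and $d_k = n - i_k + i_1$, so that $\sum_{j=1}^{k} d_j = n$. Because $I$ is independent in $C_n(1,3)$, no two elements of $I$ can be at cyclic distance $1$ or $3$; therefore every gap satisfies $d_j \notin \{1,3\}$. In particular, each even gap is $\geq 2$ and each odd gap is $\geq 5$.

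Next I would use a parity argument. The elements $i_j$ and $i_{j+1}$ have the same parity precisely when $d_j$ is even, so the number $c$ of odd gaps equals the number of parity-flips as one traverses the cyclic sequence. Since this traversal is closed, $c$ must be even, and since $I$ contains numbers of both parities, at least one flip occurs, so $c \geq 2$.

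Combining these, with $c$ odd gaps (each $\geq 5$) and $k-c$ even gaps (each $\geq 2$), the gap-sum identity gives
\[
n \;=\; \sum_{j=1}^{k} d_j \;\geq\; 2(k-c) + 5c \;=\; 2k + 3c \;\geq\; 2k + 6,
\]
so $k \leq \tfrac{n}{2} - 3$, as required. I do not anticipate a real obstacle: the only subtlety is justifying that odd gaps must be at least $5$ (immediate since $d_j \notin \{1,3\}$) and that mixed parity forces $c \geq 2$ (immediate from the cyclic parity-count being even and nonzero); the hypothesis $n \geq 8$ is used only to ensure such configurations are meaningful.
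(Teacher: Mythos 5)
Your proof is correct and is essentially the paper's argument in different clothing: the paper partitions $I$ into maximal same-parity blocks and charges each block $2a_i+3$ (even internal gaps of at least $2$, inter-block gaps of at least $5$, at least two blocks), which is exactly your $n\geq 2k+3c$ with $c\geq 2$ odd gaps. Your gap-sequence formulation is, if anything, a bit cleaner about the cyclic wrap-around and about why the number of parity transitions is even, but the underlying counting is identical.
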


\textbf{Proof: }
Let $I$ be an arbitrary independent set in $C_n(1,3)$ with the
partition $I= A_1\cup \cdots \cup A_t$, where $t\geq 2$, such the set
$A_i$ consists of odd numbers if and only if $i$ is odd. Let also
\[
A_i=\{x_1^{(i)},\ldots, x_{a_i}^{(i)}\}
\]
for $1\leq i\leq t$, where $|A_i|=a_i$. Without loss of
generality, we may assume $x_1^{(i)}< x_2^{(i)}< \cdots <
x_{a_i}^{(i)}$ and $x_1^{(1)} =1$. Then we have
$x_1^{(i+1)}\geq x_{a_i}^{(i)}+5$ for $1\leq i\leq t-1$. Therefore
\[
n  \geq  \sum\limits_{i=1}^{t}(2a_i+3)
              =      2|I|+3t,
              \geq  2|I|+6,
\]
which implies that $|I|\leq \frac{n}{2}-3$.
$\square$ 

\begin{cor}\label{cor1}
{\rm
Let $f=(V_1,\cdots, V_{\ell})$ be a proper coloring of $C_n(1,3)$ for even $n \geq 18$. If $|V_i|\geq \frac{n}{2}-2$ for some $i$, then
$f$ is not a TDC.
}
\end{cor}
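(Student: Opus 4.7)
The plan is to exploit the enormous size of $V_i$: it must lie in one parity class, leaving only a handful of vertices of that parity available to totally dominate the other parity. Since $f$ is proper, $V_i$ is independent, and since $|V_i|\geq \frac{n}{2}-2 > \frac{n}{2}-3$, Lemma \ref{Max.ind.dif.parity} forces $V_i$ to be monochromatic in parity; by the symmetry $v\mapsto v+1$ we may assume $V_i\subseteq V_e$. Proposition \ref{alpha_Cn(1,3)} then gives $|W|\leq 2$, where $W = V_e\setminus V_i$.

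Next I would observe that because $n$ is even, every vertex $w$ in $C_n(1,3)$ has $N(w)$ entirely of opposite parity, so any color class $V_j$ that totally dominates $w$ satisfies $V_j\subseteq N(w)$; in particular $|V_j|\leq 4$ and $V_j$ is monochromatic in parity. For $n\geq 18$ we have $|V_i|\geq 7>4$, so $V_i$ totally dominates no vertex at all. Consequently every odd vertex must be totally dominated by some color class sitting inside $V_e\setminus V_i = W$. A color class containing an odd vertex cannot be contained in $N(w)$ for any odd $w$, so mixed-parity classes are discounted immediately; only pure-even sub-classes of $W$ can contribute.

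I would finish by a short case split on $|W|$ and on how its elements are partitioned into color classes. If $|W|=0$, no odd vertex can be totally dominated. If $|W|=1$, the only candidate class is a singleton, which totally dominates at most $4$ odd vertices. If $|W|=2$, say $W=\{x,y\}$, then either $\{x,y\}$ is a single color class, which by Observation \ref{obs2} has at most $3$ common neighbors, or $x$ and $y$ lie in distinct classes whose useful refinements are $\{x\}$ and $\{y\}$, together covering at most $|N(x)|+|N(y)|=8$ odd vertices. In every case at most $8$ odd vertices are totally dominated, but there are $\frac{n}{2}\geq 9$ of them for $n\geq 18$, so some odd vertex fails to be totally dominated and $f$ is not a TDC. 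The main obstacle, though small, is recognizing that mixed-parity classes cannot contribute at all to total domination in $C_n(1,3)$; once that is in hand, the counting $8<9$ finishes the job.
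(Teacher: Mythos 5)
Your proposal is correct and follows essentially the same route as the paper's proof: Lemma \ref{Max.ind.dif.parity} forces $V_i$ into one parity class, the opposite-parity neighborhoods then restrict any totally dominating class to the at most two leftover vertices of that parity, and a count shows at most $8 < \frac{n}{2}$ vertices of the other parity can be totally dominated. You merely make explicit the $8$-versus-$9$ counting that the paper compresses into ``the condition $n\geq 18$ implies.''
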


\textbf{Proof: }
By Lemma \ref{Max.ind.dif.parity}, the vertices in $V_i$ have the same parity. Without loss of generality, we may assume the vertices in $V_i$ are odd. Since $N(x) \subseteq V_e$ for any odd $x$, $N(y)\subseteq V_o$  for any even $y$ and $|V_o\cap(\bigcup_{j=1,j\neq i}^{\ell}V_j)|\leq 2$, the condition $n\geq 18$ implies that there exists
an even $x$ such that $x\nsucc_t V_j$ for each $1\leq j \leq \ell$, a contradiction.
$\square$ 
\section{A Lower Bound}
In this section we find a lower bound for the total dominator chromatic number of $C_n(1,3)$ for $n\geq 7$.
We make use of the following result in this section.

\begin{thm} \emph{\cite{Rad}}
\label{TDN C_n(1,3)}
{\rm
For any $n\geq 4$,
\begin{eqnarray*}
\gamma_t(C_n(1,3))= \left\{
\begin{array}{ll}
\lceil\frac{n}{4}\rceil+1 & \mbox{  if } n\equiv
2,4\pmod{8} , \\
\lceil\frac{n}{4}\rceil  & \mbox{ otherwise.}\\
\end{array}
\right.
\end{eqnarray*}
}
\end{thm}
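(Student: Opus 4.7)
The plan is to establish the theorem by matching lower and upper bounds on $\gamma_t(C_n(1,3))$ in each residue class of $n$ modulo~$8$. The argument splits naturally by the parity of~$n$, since $C_n(1,3)$ is bipartite precisely when $n$ is even.

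For the lower bound, I would first observe that $C_n(1,3)$ is $4$-regular for $n\geq 7$, so the standard inequality $\gamma_t(G)\geq n/\Delta(G)$ yields $\gamma_t(C_n(1,3))\geq \lceil n/4\rceil$. This already matches the claimed value whenever $n$ is odd. When $n$ is even, the shifts $\pm 1,\pm 3$ all flip parity, so $C_n(1,3)$ is bipartite with bipartition $V_o\cup V_e$. For any TDS $D$, the set $D\cap V_o$ must totally dominate $V_e$ and $D\cap V_e$ must totally dominate $V_o$; since every vertex has exactly four neighbors on the opposite side, we get $|D\cap V_o|,|D\cap V_e|\geq \lceil (n/2)/4\rceil=\lceil n/8\rceil$, hence $\gamma_t(C_n(1,3))\geq 2\lceil n/8\rceil$. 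A routine check shows that $2\lceil n/8\rceil$ equals $\lceil n/4\rceil$ when $n\equiv 0,6\pmod 8$ and equals $\lceil n/4\rceil+1$ when $n\equiv 2,4\pmod 8$, which is exactly the stated lower bound for even~$n$.

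For the upper bound, the basic building block is a consecutive pair $\{8t+1,8t+2\}$: the neighbors of $8t+1$ are $8t,8t+2,8t-2,8t+4$ and the neighbors of $8t+2$ are $8t+1,8t+3,8t-1,8t+5$, so this pair totally dominates the block $\{8t-2,8t-1,\ldots,8t+5\}$ of eight consecutive labels. Consequently, when $n=8m$ the set $\{8t+1,8t+2\mid 0\leq t\leq m-1\}$ is a TDS of size $2m$, matching the lower bound. For the remaining residues I would extend this template by adding one further consecutive pair, or by adjusting and shifting the last pair near the ``seam'' where the cycle closes, so that the total size equals the claimed value and every vertex is covered. The small cases $n\in\{4,5,6\}$ are handled separately by inspection ($C_4(1,3)$ is a $4$-cycle, $C_5(1,3)\cong K_5$, and $C_6(1,3)\cong K_{3,3}$).

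The main obstacle is the upper-bound construction for $n\equiv 1,3\pmod 8$. In those classes the claimed value $\lceil n/4\rceil$ is strictly smaller than $2\lceil n/8\rceil$, so any extremal TDS must break the parity pattern used for even~$n$: it must exploit the same-parity edges created by the wrap-around of the connection number~$3$, which only appear when $n$ is odd. Designing a TDS that uses $m$ consecutive pairs together with a single well-placed extra vertex of the correct parity, so that the seam is covered with no wasted domination, is the delicate part; once the construction is in hand, the remaining residues reduce to bookkeeping.
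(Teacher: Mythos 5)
You should first note that the paper does not prove this statement at all: it is quoted verbatim from Jafari Rad's paper \cite{Rad} (``Domination in circulant graphs''), so there is no internal proof to compare your argument against. I will therefore assess your proposal on its own merits. Your lower bound is correct and complete: the degree bound gives $\gamma_t\geq\lceil n/4\rceil$ for the $4$-regular cases, and for even $n$ the bipartite refinement $|D\cap V_o|,|D\cap V_e|\geq\lceil n/8\rceil$ gives $\gamma_t\geq 2\lceil n/8\rceil$, which exceeds $\lceil n/4\rceil$ by exactly $1$ precisely when $n\equiv 2,4\pmod 8$. Your upper bound template also works where you claim it is routine: for $n\equiv 0\pmod 8$ the $k$ pairs tile perfectly; for $n\equiv 2,4,5,6,7\pmod 8$ one additional consecutive pair near the seam suffices; and for $n=8k+1$ the $k$ standard pairs leave only the vertex $8k-2$ uncovered, so adding the single vertex $n=8k+1$ (which is adjacent to $1\in D$ and to $8k-2$) gives a TDS of size $2k+1$.

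The genuine gap is the case $n=8k+3$, and there your proposed template provably fails rather than merely being ``delicate.'' A set of $k$ difference-$1$ pairs covers at most $8k$ vertices, so at least three vertices remain, while the overlap budget $4(2k+1)-(8k+3)=1$ allows only one repeated domination; but the extra vertex must be adjacent to some pair $\{w,w+1\}$ in order to be totally dominated itself, and a short check of the six possible positions in $[w-3,w+4]\setminus\{w,w+1\}$ shows its neighbourhood always meets that pair's coverage in at least two vertices. Equivalently, with the standard placement the uncovered set is the three consecutive vertices $8k-2,8k-1,8k$, and no vertex of $C_n(1,3)$ with $n\geq 8$ has three consecutive labels in its neighbourhood $\{v\pm 1,v\pm 3\}$. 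The repair is to abandon difference-$1$ pairs entirely in this residue class: the triple $\{v,v+3,v+6\}$ totally dominates $11$ vertices with a single overlap (it is a TDS of $C_{11}(1,3)$ by itself), and together with $k-1$ difference-$3$ pairs $\{v+8j+3,v+8j+6\}$, $1\leq j\leq k-1$, whose coverage patterns interlock at spacing $8$, it yields a TDS of size $2k+1$ for every $n=8k+3$. With that substitution your outline closes to a complete proof.
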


First we find lower bounds for the total dominator chromatic number of $C_n(1,3)$ when $n\in\{7,8,9,10,11,12,14,18,19\}$.

\begin{lem} \label{TDCN the 7 numbers}
{\rm
For any circulant graph $C_n(1,3)$,
\begin{eqnarray*}
\chi_{d}^t(C_n(1,3)) \geq \left\{
\begin{array}{ll}
2\lceil \frac{n}{8} \rceil & \mbox{ if } 8\leq n \leq 10,\\
2\lceil \frac{n}{8}\rceil +1     & \mbox{ if } n=11,\\
2\lceil \frac{n}{8}\rceil +2    & \mbox{ if } n=7,12,14,18,19.
\end{array}
\right.
\end{eqnarray*}
}
\end{lem}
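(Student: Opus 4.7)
The plan is to handle each of the eight values of $n$ separately, combining the structural tools of Section~2 with small case analyses. The easy cases come first: for $n=7$, Proposition \ref{alpha_Cn(1,3)} gives $\alpha(C_7(1,3))=2$, so $\chi\ge\lceil 7/2\rceil=4$ and hence $\chi_d^t\ge 4$ by (\ref{max{chi,gamma_t} =< chi_d^t}); for $n=8$ the bound $\chi_d^t\ge 2$ is immediate from the existence of an edge; for $n=10$, Theorem \ref{TDN C_n(1,3)} yields $\gamma_t(C_{10}(1,3))=4$, which passes to $\chi_d^t$ via (\ref{max{chi,gamma_t} =< chi_d^t}).

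For $n=9$, I would suppose $\ell=3$: since $\alpha=3$ and $\sum v_i=9$ force $v_1=v_2=v_3=3$, Observation \ref{obs2} caps $|CN(V_i)|\le 2$ for each independent triple, so $\sum v_i'\le 6<9$, contradicting (\ref{sum v_i'>=n}). For $n=11$, I would suppose $\ell=4$: since $\alpha=4$ gives $v_i\le 4$ for all $i$, Observation \ref{Obs.1} yields $v_i+v_i'\le 5$, and summing gives $22\le\sum v_i+\sum v_i'\le 20$, a contradiction.

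The cases $n=12$ and $n=14$ go through a parity argument. Every odd vertex has its neighbors in $V_e$, so any class totally dominating it must lie entirely in $V_e$, and symmetrically for even vertices. I partition the $\ell$ classes into $\mathcal{E}$ (all-even), $\mathcal{O}$ (all-odd), and mixed, with counts $a,b,c$; by Lemma \ref{Max.ind.dif.parity} each mixed class has size at most $n/2-3$. Applied to the small classes of $\mathcal{E}$, Observation \ref{Obs.1} gives $5a_s-M_{e,s}\ge|V_o|$ and symmetrically $5b_s-M_{o,s}\ge|V_e|$, where $a_s,b_s$ count the small classes in each parity group and $M_{e,s},M_{o,s}$ are their total sizes. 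Combining these with $M_e+M_o+M_{\text{mix}}=n$, the mixed size bound, and the size constraints $M_e,M_o\le n/2$, a short enumeration of the feasible $(a,b,c)$ with $a+b+c\le 5$ produces an arithmetic contradiction in every case.

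The main obstacle is $n=19$, where no parity splitting is available. Supposing $\ell\le 7$, Observation \ref{Obs.1} and (\ref{sum v_i'>=n}) force at least one class to be \emph{large} (i.e., $v_i\ge 5$), for otherwise $\sum(v_i+v_i')\le 35<38$. Letting $k$ be the number of large classes and $L$ their total size, the same counting yields $L\ge 3+5k$, $L\le 8k$ (since $v_i\le\alpha=8$), and $L\le 12+k$ (since each small class has $v_i\ge 1$); so $k\in\{1,2\}$ with $L\in\{8,13,14\}$, and every small class must saturate $v_i+v_i'=5$. A structural analysis shows that the maximum independent sets of $C_{19}(1,3)$ are unique up to rotation (eight vertices in alternating positions with one cyclic gap of~$5$), which pins down each large class, and Observation \ref{obs2} then rigidly constrains the cyclic spacing of each small class. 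I expect the proof to conclude with a finite but careful inspection showing that in every surviving configuration some vertex lies outside $\bigcup_i CN(V_i)$, contradicting (\ref{cup CN(V_i)=V(G)}).
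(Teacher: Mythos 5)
Your treatment of $n\in\{7,8,9,10,11\}$ is correct and essentially matches the paper's (which uses $\chi_d^t\ge\chi$ for $n=7$, $\chi_d^t\ge\gamma_t$ with Theorem \ref{TDN C_n(1,3)} for $n=8,10$, and the counting inequality $\sum v_i'\le\sum(5-v_i)<n$ for $n=9,11$; your variants via $\alpha$ and Observation \ref{obs2} are equivalent). For $n=12$ and $n=14$ your parity bookkeeping is sound and in fact tidier than the paper's: the paper runs an ad hoc case split on $(v_1,\dots,v_5)$ for $n=12$ only and declares $n=14,19$ ``similar,'' whereas your inequalities $4a_s\ge 5a_s-M_{e,s}\ge|V_o|$ and $4b_s\ge|V_e|$ force $a_s,b_s\ge2$, after which every feasible $(a,b,c)$ dies by a straightforward size count. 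I checked the enumeration for both $n=12$ and $n=14$; it closes.

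The genuine gap is $n=19$. Your counting correctly pins down $\ell=7$, $k\in\{1,2\}$ and $L\in\{8,13,14\}$, but the argument stops exactly where the real work begins: ``I expect the proof to conclude with a finite but careful inspection'' is a plan, not a proof, and the inspection is neither routine nor correctly set up as described. Two concrete problems. First, your structural claim --- that the large classes are pinned down as the unique-up-to-rotation maximum independent set with cyclic gap pattern $(2^7,5)$ --- applies only to classes of size exactly $8$; for $k=2$ the large classes may have sizes $(7,6)$ or $(7,7)$, and independent sets of size $7$ in $C_{19}(1,3)$ are \emph{not} unique up to rotation (gap patterns $(2^6,7)$ and $(2^5,4,5)$ both occur), so the configuration space is substantially larger than your sketch assumes. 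Second, even in the cleanest case $k=1$ with $V_1=\{1,3,\dots,15\}$, the remaining $11$ vertices split into $6$ small classes with several admissible size profiles (e.g.\ $(4,3,1,1,1,1)$, $(4,2,2,1,1,1)$, $(3,3,2,1,1,1)$, $(3,2,2,2,1,1)$, $(2,2,2,2,2,1)$), each forced by Observations \ref{Obs.1} and \ref{obs2} to saturate $v_i+v_i'=5$ with the common neighborhoods exactly partitioning $V$; one must actually exhibit the obstruction in each profile. Since $19$ is odd, no parity shortcut is available and this case cannot be absorbed into your $n=12,14$ machinery; as written, the lemma remains unproved for $n=19$. (The paper also waves at this case, but your proof has to stand on its own.)
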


\textbf{Proof: }
We know
\begin{eqnarray*}
\begin{array}{ll}
\chi_{d}^t(C_n(1,3))\geq \chi(C_n(1,3))=2\lceil \frac{n}{8} \rceil +2 & \mbox{ if } n=7, \mbox{     ~~~~(by (\ref{max{chi,gamma_t} =< chi_d^t}))}\\
\chi_{d}^t(C_n(1,3))\geq \gamma_t(C_n(1,3))=2\lceil \frac{n}{8} \rceil & \mbox{ if } n=8,10, 
\mbox{ (by (\ref{max{chi,gamma_t} =< chi_d^t})+Theo. \ref{TDN C_n(1,3)})}.
\end{array}
\end{eqnarray*}
Let $f=(V_1,V_2,V_3)$ be a TDC of $C_9(1,3)$, $v_i=|V_i|$ and $v_i'=|CN_G(V_i)|$. Then
\[
\sum\limits_{i=1}^{3}v_i' \leq  \sum\limits_{i=1}^{3}(5-v_i)
                                        =  15-\sum\limits_{i=1}^{3}v_i
                                        =  6
                                        <  9,
\]
which contradicts (\ref{sum v_i'>=n}). Hence, $\chi_{d}^t(C_9(1,3)) \geq 4$. In a similar fashion, it can be proved that $\chi_d^t(C_{11}(1,3))\geq 5$. Now we prove $\chi_{d}^t(C_n(1,3)) \geq 2\lceil \frac{n}{8}\rceil +2$ for $n=12,14,19$. Since their proofs are similar, we only prove $\chi_d^t(C_{12}(1,3))\geq 6$.

Let $f=(V_1, \cdots, V_5)$ be a proper coloring of $C_{12}(1,3)$, where $v_1\geq v_2\geq \cdots \geq v_{5}$, and let $m=|\{i~|~v_i\geq 5\}|$. Note that $m=0$ or 1. First let $m=0$. Then $(v_1,\cdots, v_5)\in \{(4,3,2,2,1),(4,2,2,2,2),(3,3,3,2,1)\}$. Let $(v_1,\cdots, v_5)=(4,3,2,2,1)$ (other cases are similar). Hence, $(v_1',\cdots, v_5') \leq (1,2,3,3,$ $4)$ by Observation \ref{Obs.1}. If the numbers in some $V_i$ have different parity, then $|V_o|=|V_e|$=6 implies that the numbers in some $V_j$, $i\neq j$, have different parity. Hence, $v_i'=v_j'=0$ and so $\sum\limits_{i=1}^{5}v_i' < 12$, a contradiction with (\ref{sum v_i'>=n}).
Therefore, we assume the numbers in each $V_i$ have the same parity. Then $V_i$ is a subset of $V_e$ (or $V_o$) if and only if $CN_{C_{12}(1,3)}(V_i)$ is a subset of $V_o$ (or $V_e$). Without loss of generality, let $V_1\subseteq V_e$. Then either $V_3\subseteq V_e$ or $V_4\subseteq V_e$ (not both). Therefore
\[
\begin{array}{ll}
V_o=CN_{C_{12}(1,3)}(V_1) \cup CN_{C_{12}(1,3)}(V_3) \mbox{ or}\\
V_o=CN_{C_{12}(1,3)}(V_1) \cup CN_{C_{12}(1,3)}(V_4),
\end{array}
\]
which contradicts the fact that $|CN_{C_{12}(1,3)}(V_1) \cup CN_{C_{12}(1,3)}(V_i)| < |V_o|$ for $i=3,4$.

Now let $m=1$. Then $5\leq v_1\leq 6=\alpha (C_{12}(1,3))$. Lemma \ref{Max.ind.dif.parity} implies that the vertices in $V_1$ have the same parity. Without loss if generality, we may assume the numbers in $V_1$ are odd. Hence, $|(V_2\cup \cdots \cup V_5)\cap V_o|\leq 1$. If $v_1=6$, then $|(V_2\cup \cdots \cup V_5)\cap V_o|=0$, and so for any even $v$, $v\nsucc_t V_i$ for each $i$. Now assume $v_1=5$. Then $|(V_2\cup \cdots \cup V_5)\cap V_o|=1$. If $V_o=V_1\cup V_5$, then for some even $v$, we will have $v\nsucc_t V_i$ for each $i$. Now assume $V_o\neq V_1\cup V_5$. Without loss of generality, we may assume the numbers in $V_2$ have different parity, and so $v_2'=0$. Also we know $v_1=5$ implies that $(v_2,\cdots, v_5)=(4,1,1,1)$ or $(v_2,\cdots, v_5)=(3,2,1,1)$, or $(v_2,\cdots, v_5)=(2,2,2,1)$, and so $(v_2',\cdots, v_5')\leq (1,4,4,4)$ or $(v_2',\cdots, v_5')\leq (2,3,4,4)$ or $(v_2',\cdots, v_5')\leq (3,3,3,4)$, respectively. Let $(v_2',\cdots, v_5')=(1,4,4,4)$ (the proof of the other cases are left to the reader). Since $V_3\cup V_4\cup V_5 \subseteq V_e$, we obtain
\[
\bigcup\limits_{i=3}^{5} CN_{C_{12}(1,3)}(V_i)\subseteq V_o,
\]
and so $CN_{C_{12}(1,3)}(V_1)=CN_{C_{12}(1,3)}(V_2)=\emptyset$, therefore $(V_1\cup \cdots \cup V_5)\cap V_e=\emptyset$, a contradiction.
$\square$ 

Next we find lower bounds for the total dominator chromatic number of $C_n(1,3)$ when $n\geq 13$.
\begin{lem} \label{TDCN n>=13 except 14,19}
{\rm
For any circulant graph $C_n(1,3)$ of order $n\geq 13$,
\begin{eqnarray*}
\chi_{d}^t(C_n(1,3)\geq \left\{
\begin{array}{ll}
2\lceil \frac{n}{8}\rceil +1     & \mbox{ if }n\equiv 1\pmod{8},\\
2\lceil \frac{n}{8}\rceil +2    & \mbox{ otherwise.}
\end{array}
\right.
\end{eqnarray*}
}
\end{lem}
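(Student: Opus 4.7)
The plan is to argue by contradiction: assume $f=(V_1,\ldots,V_\ell)$ is a TDC of $C_n(1,3)$ with $\ell$ strictly smaller than the claimed bound, and derive a contradiction from a counting estimate refined by the structural results of Observations~\ref{Obs.1}, \ref{obs2}, \ref{obs3} and Corollary~\ref{cor1}.

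The first step is a linear-programming style lower bound on $\ell$, mirroring the approach used in Lemma~\ref{TDCN the 7 numbers}. Partition the classes into $A=\{i:v_i\leq 4\}$ and $B=\{i:v_i\geq 5\}$, and set $t=\sum_{i\in B}v_i$ and $b=|B|$. Observation~\ref{Obs.1} gives $\sum_{i\in A}(v_i+v_i')\leq 5|A|$ together with $\sum_{i\in B}v_i'=0$, and combined with (\ref{sum v_i'>=n}) this yields
\[
5|A|\;\geq\;\sum_{i\in A}v_i+\sum_{i=1}^{\ell}v_i'\;\geq\;(n-t)+n,
\]
so $\ell\geq|A|+b\geq\lceil(2n-t)/5\rceil+b$. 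Using Proposition~\ref{alpha_Cn(1,3)} to bound each $v_i$ (and, when $n\geq 18$ is even, sharpening this to $v_i\leq n/2-3$ via Corollary~\ref{cor1}), together with the feasibility inequality $|A|\leq n-t$, one minimizes the right-hand side over $(t,b)$ to obtain a numerical lower bound on $\ell$ in each residue class $n\bmod 8$.

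The residual residues where this LP estimate falls short of the target by one or two are to be closed by an open-packing refinement. In any configuration attaining the LP bound, equality forces $v_i+v_i'=5$ for every $i\in A$; in particular, every singleton $\{u\}\in A$ satisfies $CN(\{u\})=N(u)$ and contributes four vertices to the common-neighborhood coverage. When $\sum v_i'$ is exactly $n$, these neighborhoods must be pairwise disjoint, so the singleton vertices form an open packing; Proposition~\ref{open-packing Cn(1,3)} then caps their number by $\rho^o(C_n(1,3))$, and the rigid structure described in Observation~\ref{obs3} (with an extra isolated vertex precisely when $n\equiv 5,7\pmod 8$) explains why the correction term is $+1$ in the residue $n\equiv 1\pmod 8$ and $+2$ elsewhere.

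The main obstacle is this last step. For several residues $n\bmod 8$ the LP estimate already meets the target, but in others it falls short by one and one must rule out the LP-extremal configurations by combining several tools at once: Observation~\ref{obs2} to pin down $|CN(V_i)|$ from the cyclic distances between members of $V_i$; the parity argument of Lemma~\ref{Max.ind.dif.parity} and Corollary~\ref{cor1}, which forces any near-maximum color class onto a single parity and then shows that the opposite parity cannot be fully covered by the remaining classes; and the open-packing bound from Proposition~\ref{open-packing Cn(1,3)}. The case analysis splits naturally into the regimes $n\equiv 1\pmod 8$, the remaining odd residues, and the even residues, with the small exceptional values $n=14,19$ already dispatched in Lemma~\ref{TDCN the 7 numbers}.
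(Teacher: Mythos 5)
Your overall skeleton --- bound $\ell$ by a counting argument split according to the number of color classes of size at least $5$, then invoke open packings to eliminate the extremal configurations --- is the same strategy the paper uses, and your LP step correctly reproduces the easy cases (no large class, three or more large classes, one large class, and two large classes when $n\equiv 0,1,5,6,7\pmod 8$). The genuine gap is in the step you call the ``open-packing refinement,'' which is exactly where all the work in this lemma lives.

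For two large classes and $n\equiv 2,3,4\pmod 8$ your LP bound falls short of the target, and the surviving configurations have $v_1+v_2$ confined to a window of two or three values, with the remaining $\ell-2$ classes being singletons, or one class of size $2$ or $3$ plus singletons. You dispose of these by asserting that tight coverage forces the singleton neighborhoods to be pairwise disjoint, so the singletons form an open packing whose size is then capped by Proposition~\ref{open-packing Cn(1,3)}. That assertion fails in the cases that matter. Take $n=8k+3$ with $v_1+v_2=6k+2$: there are $2k+1$ singletons, each contributing $4$ to the coverage, so $\sum_i v_i'=8k+4=n+1$. The condition $\bigcup_i CN(V_i)=V$ therefore tolerates one unit of overlap: the singletons need not be an open packing, only an open packing $L$ of size $2k$ together with one extra vertex $w_j$ satisfying $|N(w_j)\cap N(L)|\leq 1$. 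Ruling out such a near-packing is the heart of the paper's proof and requires genuinely new work: Observation~\ref{obs2} to compute $|N(u)\cap N(L)|$ from cyclic distances, Observation~\ref{obs3} to pin down the shape of a maximum open packing, an analysis of where $w_j$ can sit relative to a block $S$ of four packing vertices, and in the residual configuration a two-colorability obstruction for $V\setminus(L\cup\{w_j\})$. Parallel arguments are needed when one remaining class has size $2$ (where $|CN(V_3)|=3$ forces distance $2$ and one must show $N(L)\cap CN(V_3)\neq\emptyset$) and, for $n\equiv 2\pmod 8$, a parity count when $v_1+v_2=6k-1$. None of this follows from your disjointness claim, and the sketch offers no substitute. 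Relatedly, your closing remark that the isolated vertex in Observation~\ref{obs3} ``explains'' the $+1$ versus $+2$ dichotomy misidentifies the mechanism: for $n\equiv 1\pmod 8$ the crude bound $\sum v_i'\leq 4(\ell-2)<n$ already eliminates the two-large-class case at $\ell=2\lceil n/8\rceil$, which is why that residue needs no refinement at all.
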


\textbf{Proof: }
The statement is true for $n=14,18, 19$ by Lemma \ref{TDCN the 7 numbers}. Now assume $n\neq 14,18,19$.
Let $f=(V_1,V_2,\cdots,V_{\ell})$ be a TDC of $C_n(1,3)$, where $v_1\geq v_2\geq \cdots \geq v_{\ell}$ and
\begin{eqnarray*}
\ell = \left\{
\begin{array}{ll}
2\lceil \frac{n}{8}\rceil         & \mbox{ if }n\equiv 1\pmod{8},\\
2\lceil \frac{n}{8}\rceil +1    & \mbox { if }n\not\equiv 1\pmod{8}.
\end{array}
\right.
\end{eqnarray*}

\noindent Let $m=|\{i~|~v_i\geq 5\}|$.
\noindent If $m\geq 3$, we may assume $v_1'=v_2'=v_3'=0$. Then
$
\sum_{i=1}^{\ell}v_i'  =  \sum_{i=4}^{\ell}v_i'
                                           \leq 4(\ell-3)
                                            <  n,
$
a contradiction.

\noindent If $m=0$, then $5\ell< 2n$ and so $\sum\limits_{i=1}^{\ell} v_i'< n$, which is a contradiction with (\ref{sum v_i'>=n}).
Now let $m=1$. Hence, $v_1'=0$.
By Observation \ref{Obs.1}, Proposition \ref{alpha_Cn(1,3)} and Corollary \ref{Max.ind.dif.parity},
\[
\sum\limits_{i=1}^{\ell}v_i'  =  \sum\limits_{i=2}^{\ell}v_i'
                                           \leq  \sum\limits_{i=2}^{\ell}(5-v_i)
                                           =  5(\ell-1)-n+v_1 <  n,
\]
a contradiction with (\ref{sum v_i'>=n}).
Now suppose that $m=2$. If $n\equiv 0,1,5,6,7\pmod{8}$, then
\[
\sum\limits_{i=1}^{\ell}v_i'  =  \sum\limits_{i=3}^{\ell}v_i'
                                           \leq 4(\ell-2)
                                            <  n,
\]
a contradiction with (\ref{sum v_i'>=n}).

The remaining cases are $n\equiv 2,3,4\pmod{8}$ and $m=2$.

\textbf{Case 1.} $n\equiv 3\pmod{8} \mbox { and }m=2$.

\noindent Let $n=8k+3$ for some nonnegative integer $k$ and $m=2$.
Since $m=2$, it follows that $v_1\geq v_2\geq 5$. Note that, by definition, $\ell=2k+3$. Since
$\ell -2\leq n-(v_1+v_2),$ it follows that $v_1+v_2\leq 6k+2$.
If $v_1+v_2\leq 6k$, then
\[
\sum\limits_{i=1}^{\ell}v_i'  =  \sum\limits_{i=3}^{\ell}v_i'
                            \leq \sum\limits_{i=3}^{\ell} (5-v_i)
                            = 5(\ell-2)-n+v_1+v_2 <  n,
\]
which contradicts (\ref{sum v_i'>=n}). We consider two following subcases.
\begin{itemize}
\item{Subcase 1.1.} $v_1+v_2=6k+2$.

\noindent Then $n-(v_1+v_2)=2k+1$ and $V_i=\{w_i\}$ for $3\leq i\leq \ell=2k+3$.
In order to satisfy the inequality $\sum_{i=1}^{\ell}v_i'\geq n$, given in (\ref{sum v_i'>=n}), we must have
$\bigcup_{i=3}^{\ell} N(w_i)= V(C_n(1,3))$.
Hence, there exists a set $L\subset \{w_i: 3\leq i\leq \ell\}$ of cardinality $2k$ which is an open packing in $C_n(1,3)$. Let $\{w_j\}=\{w_i: 3\leq i\leq \ell\}\setminus L$. Obviously, if $i,i+3\in L$ and
$w_j=i+1$ or
$w_j=i+2$ (addition is modulo $n$ with residue in $\{1,2,\ldots,n\}$), then $|N(w_j)\cap N\{i,i+3\}|=3$ by Observation \ref{obs2}. So $\sum_{i=1}^{\ell}v_i'<n$,
a contradiction.
For the other cases we proceed as follows.
Let $a,b,c$ be three distinct vertices of $V(C_n(1,3))$. We say vertex $b$ is between vertices $a$ and $c$ if
the smaller path between $a$ and $c$, on the cycle $(1,2,\ldots, n),$ contains vertex $b$.
Let $S$ be a subset of $L$ with four vertices such that no vertex of $L\setminus S$ is between any two vertices of $S$ and the subgraph induced by $S$ consists of two edges. Note that by Observation \ref{obs3}
the subgraph induced by $L$ consists of $k$ edges. The possibilities for $S$ are
$\{i,i+1,j,j+1\}$, $\{i,i+1,j,j+3\}$ and $\{i,i+3,j,j+3\}$. If $S=\{i,i+1,j,j+1\}$ or $S=\{i,i+1,j,j+3\}$, then, by Observation \ref{obs2}, it is easy to see that for every $i+1 < u < j$, $|N(u)\cap N(L)|\geq 2$. If $S=\{i,i+3,j,j+3\}$, then for every $i+1 < u < j$, $|N(u)\cap N(L)|\geq 2$ provided $j-(i+3)\neq 8$. So in these cases $\sum_{i=1}^{\ell}v_i'<n$, a contradiction. Finally, if $j-(i+3)=8$,
then $|N(i+6)\cap N(L)|=1$ and $|N(i+8)\cap N(L)|=1$. But if $w_j=i+6$ or $w_j=i+8$, then the set
$V\setminus (L\cup\{w_j\})$ cannot be partitioned into two independent sets.

\item{Subcase 1.2.} $v_1+v_2=6k+1$.

\noindent Then $n-(v_1+v_2)=2k+2$, $V_3=\{a,b\}$ and $V_i=\{w_i\}$ for $4\leq i\leq \ell$.
In order to satisfy the inequality $\sum_{i=1}^{\ell}v_i'\geq n$ given in (\ref{sum v_i'>=n}) we must have
$(\bigcup_{i=4}^{\ell} N(w_i))\cup CN(V_3)= V(C_n(1,3))$. Therefore $L=\{w_i\mid 4\leq i\leq \ell\}$
forms an open packing of size $2k$ in $C_n(1,3)$. In addition, $|CN(V_3)|=3$ and
$N(L)\cap CN(V_3)=\emptyset$.
By Observation \ref{obs2}, the equality $|CN(V_3)|=3$ implies that the distance between
vertices $a$ and $b$ on the cycle $(1,2,\ldots,n)$ must be $2$ .
Let $S$ be as in Subcase 1.1. It is easy to see that for every two vertices $a$ and $b$ of distance two
which are between any two vertices of $S$, we obtain
$|N(L)\cap CN\{a, b\}|\geq 1$, which is a contradiction.
\end{itemize}
\textbf{Case 2.} $n\equiv 2, 4\pmod{8} \mbox { and }m=2$.

\noindent Let $n=8k+2$ for some nonnegative integer $k$.
Since $m=2$, it follows that $v_1\geq v_2\geq 5$. Note that, by definition, $\ell=2k+3$. Since
$\ell -2\leq n-(v_1+v_2),$ it follows that $v_1+v_2\leq 6k+1$.
On the other hand, if $v_1+v_2\leq 6k-2$, then
\[
\sum\limits_{i=1}^{\ell}v_i'  =  \sum\limits_{i=3}^{\ell}v_i'
                            \leq \sum\limits_{i=3}^{\ell} (5-v_i)
                            = 5(\ell-2)-n+v_1+v_2 <  n,
\]
which contradicts (\ref{sum v_i'>=n}).

\noindent So we have three subcases to consider, they are, $v_1+v_2=6k+1, 6k$, $6k-1$.
If $v_1+v_2=6k+1$, then $n-(v_1+v_2)=2k+1$.
An argument similar to that described in Subcase 1.1 leads to a contradiction.
If $v_1+v_2=6k$, then $n-(v_1+v_2)=2k+2$.
An argument similar to that described in Subcase 1.2 leads to a contradiction.

Now let $v_1+v_2=6k-1$. Then $n-(v_1+v_2)=2k+3$.
First let $V_3=\{a,b,c\}$.
If $v_3'\leq 1$, then $\sum_{i=1}^{\ell} v_i'\leq 8k+1<n$, a contradiction.
If $v_3'=2$, then $L=\cup_{i=4}^{\ell} V_i$ must be an open packing of cardinality $2k$.
In addition, by Observation \ref{obs2},
the distances between vertices of $V_3$ must be 2, 2 and 4.
So the vertices of $CN(V_3)$ have the same parity. If all are even (the case all are odd is similar), then $CN(V_3)$ has two odd numbers, and hence there will be $4k+2$ odd numbers in $CN(L)\cup CN(V_3)$. This is a contradiction.
Now, without loss of generality, let $|V_3|=|V_4|=2$. Then $L=\bigcup_{i=5}^{\ell} V_i$ must be an open packing of cardinality $2k-1$. Since $n$ is even, the parity of a vertex is different from the parity of its neighbors.
So without loss of generality we may assume $k$ members of $L$ are even and $k-1$ members are odd.
A simple counting argument shows that the members of $V_3$ have the same parity, say even, and the members of
$V_4$ also have the same parity, say odd (the other cases are similar).
Then we will have at most $4k-1$ even numbers in $\bigcup_{i=1}^{\ell} CN(V_i)$, which is a contradiction.

Finally, let $n=8k+4$ for some nonnegative integer $k$.
Since $m=2$, then $v_1\geq v_2\geq 5$. Note that, by definition, $\ell=2k+3$. Since
$\ell -2\leq n-(v_1+v_2),$ it follows that $v_1+v_2\leq 6k+3$.
On the other hand, if $v_1+v_2\leq 6k+2$, then
\[
\sum\limits_{i=1}^{\ell}v_i'  =  \sum\limits_{i=3}^{\ell}v_i'
                            \leq \sum\limits_{i=3}^{\ell} (5-v_i)
                            = 5(\ell-2)-n+v_1+v_2 <  n,
\]
which contradicts (\ref{sum v_i'>=n}).

\noindent Hence, $v_1+v_2=6k+3$. So $n-(v_1+v_2)=2k+1$. In order to satisfy
$\bigcup_{i=3}^{\ell} CN(V_i)=V(C_n(1,3))$, we must have $|V_i|=1$ for $3\leq i\leq \ell$ and
the set $\bigcup_{i=3}^{\ell}V_i$ must be an open packing in $C_n(1,3)$ of cardinality
$2k+1$, which is impossible by Proposition \ref{open-packing Cn(1,3)}.
$\square$ 


\section{Proof of Theorem \ref{chi_{d}^t(C_n(1,3))}}

\noindent First note that $C_6(1,3)$ is isomorphic to the complete bipartite graph $K_{3,3}$ and so $\chi_d^t(C_6(1,3))$ $=\chi(K_{3,3})=2$. Now for each $n\geq 7$ we present color classes for $C_n(1,3)$ which satisfy the equality in Lemma \ref{TDCN the 7 numbers} when $7\leq n\leq 12$, $n=14$ or $n=19$ and the equality in
Lemma \ref{TDCN n>=13 except 14,19} when $n\geq 13$ and $n\neq 14,19$. Hence, the proof of the theorem follows.

\noindent If $7\leq n\leq 11$, then the color classes are:\\
\begin{tabular}{ll}
$n=7:$ & $\{1\}, \{2,7\}, \{3,5\}, \{4,6\},$\\
$n=8:$ & $\{1,3,5,7\}, \{2,4,6,8\},$\\
$n=9:$&   $\{1,8\}, \{2,9\}, \{3,5,7\}, \{4,6\},$\\
$n=10:$ & $\{1\}, \{2\}, \{3,5,7,9\}, \{4,6,8,10\},$\\
$n=11:$& $\{1,3,5\}, \{2,11\}, \{7,9\}, \{8,10\}, \{4,6\}.$
\end{tabular}

\noindent If $n\geq 12$, we proceed as follows. Let $E$ and $O$ consist of even and odd numbers less
than or equal to $n$, respectively.
Define
$$
\begin{array}{lll}
A_1=\{n-6,n-4,n-2,n\},&&
A_2 = \{n-7, n-5, n-3, n-1\},\\
A_3 = \{n-6, n-4, n-2\},&&
A_4 = \{n-7, n-5, n-3\},\\
A_5 = \{n-6, n-4\}, &&
A_6 = \{n-7, n-5\}.
\end{array}
$$
\noindent Consider the open packing $L=\{8i+1,8i+2\mid 0\leq i\leq k-1\}$. The color classes are:
$n=8k:$\hspace{7mm} $\{v\} \mbox{ for } {v\in L}, O\setminus L, E\setminus L,$\\
$n=8k+1:$ $\{v\} \mbox{ for } {v\in L},A_1,O\setminus(L\cup A_1), E\setminus L,$\\
$n=8k+2:$ $\{v\} \mbox{ for } {v\in L},A_1,A_2,O\setminus(L\cup A_2), E\setminus (L\cup A_1),$\\
$n=8k+3:$ $\{v\} \mbox{ for } {v\in L},A_2,A_3, O\setminus(L\cup A_3), (E\setminus (L\cup A_2))\cup\{n\},$\\
$n=8k+4:$ $\{v\} \mbox{ for } {v\in L},A_3,A_4, O\setminus(L\cup A_4), E\setminus (L\cup A_3),$\\
$n=8k+5:$ $\{v\} \mbox{ for } {v\in L},A_4,A_5, (O\setminus(L\cup A_5))\cup\{n-1\}, (E\setminus (L\cup A_4))\cup\{n-2,n\},$\\
$n=8k+6:$\ $\{v\} \mbox{ for } {v\in L},A_5,A_6, O\setminus(L\cup A_6), E\setminus (L\cup A_5),$\\
$n=8k+7:$ $\{v\} \mbox{ for } {v\in L},\{n-6\}, A_6, (O\setminus L\cup \{n-6\})\cup\{n-3, n-1\}, (E\setminus (L\cup A_6))\cup\{n-4, n-2, n\}.$

Theorems \ref{chi_{d}^t(C_n(1,3))} and \ref{TDN C_n(1,3)}
lead to the following corollary.

\begin{cor}\label{corollary 2}
{\rm
\begin{eqnarray*}
\chi_{d}^t(C_n(1,3))=\left\{
\begin{array}{ll}
\gamma_t(C_n(1,3))      & \mbox{ if }n=8,10 \\
\gamma_t(C_n(1,3)) +1 & \mbox{ if }n=9 \\
\gamma_t(C_n(1,3)) +3 & \mbox{ if }n\equiv 3\pmod{8}, n\neq 11\\
\gamma_t(C_n(1,3)) +2 & \mbox{ otherwise.}
\end{array}
\right.
\end{eqnarray*}
}
\end{cor}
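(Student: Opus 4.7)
The plan is to derive Corollary \ref{corollary 2} from a direct case analysis, substituting the explicit formula for $\chi_d^t(C_n(1,3))$ from Theorem \ref{chi_{d}^t(C_n(1,3))} and the explicit formula for $\gamma_t(C_n(1,3))$ from Theorem \ref{TDN C_n(1,3)}, then comparing the two values residue class by residue class modulo $8$. Since both theorems furnish closed-form expressions, no new combinatorial argument is needed; the work is purely arithmetic bookkeeping of ceilings and an additive $+1$ correction to $\gamma_t$ when $n\equiv 2,4\pmod 8$.

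First I would dispose of the small exceptional values $n\in\{8,9,10,11\}$ by direct substitution: for $n=8$ both quantities equal $2$, for $n=10$ both equal $4$, for $n=9$ one gets $\gamma_t=3$ and $\chi_d^t=4$, and for $n=11$ one gets $\gamma_t=3$ and $\chi_d^t=5$, matching the first three lines of the corollary. For the main cases, I would write $n=8k+r$ with $r\in\{0,1,2,3,4,5,6,7\}$ and $k$ large enough that the generic formulas apply, then compute
\begin{equation*}
\Bigl\lceil \tfrac{n}{4}\Bigr\rceil,\qquad 2\Bigl\lceil \tfrac{n}{8}\Bigr\rceil,\qquad 2\Bigl\lceil \tfrac{n}{8}\Bigr\rceil+1 \text{ or } 2\Bigl\lceil \tfrac{n}{8}\Bigr\rceil+2,
\end{equation*}
according to which branch applies. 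In particular I would verify:
for $r=3$ and $n\neq 11$, $\gamma_t=2k+1$ and $\chi_d^t=2k+4$, giving difference $3$; for $r=2,4$ (where $\gamma_t$ carries the extra $+1$), $\gamma_t=2k+2$ and $\chi_d^t=2k+4$, giving difference $2$; and for $r\in\{0,1,5,6,7\}$, $\gamma_t$ and $\chi_d^t$ differ by exactly $2$ after a short computation (noting in the $r=1$ subcase that $\chi_d^t=2(k+1)+1$ rather than $+2$, and $\gamma_t=2k+1$).

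There is essentially no obstacle beyond taking care with ceilings and the two regimes that appear in Theorem \ref{TDN C_n(1,3)}; the only place where a reader might slip is confusing the $n\equiv 6\pmod 8$ case (where $\gamma_t=\lceil n/4\rceil$, not $\lceil n/4\rceil+1$) with the $n\equiv 2,4\pmod 8$ cases. Once each of the eight residue classes has been checked and the four exceptional small $n$ have been handled separately, the four-case formula of the corollary is established, completing the proof.
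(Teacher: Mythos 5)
Your proposal is correct and matches the paper exactly: the paper gives no written proof, simply asserting that Theorems \ref{chi_{d}^t(C_n(1,3))} and \ref{TDN C_n(1,3)} lead to the corollary, and your residue-by-residue comparison of $2\lceil n/8\rceil+\{0,1,2\}$ against $\lceil n/4\rceil$ (with the $+1$ correction for $n\equiv 2,4\pmod 8$), together with the direct check of $n\in\{8,9,10,11\}$, is precisely the intended arithmetic, and all the values you report are right.
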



\end{document}